\newtheorem{theorem}{Theorem}[section]
\theoremstyle{plain}
\newtheorem{corollary}{Corollary}[section]
\newtheorem{lemma}{Lemma}[section]
\newtheorem{proposition}{Proposition}[section]
\newtheorem{remark}{Remark}[section]
\numberwithin{equation}{section}
\newtheorem{openQ}{Open question}
\begin{document}

\title{Perfect (super) edge-magic crowns}

\author{S. C. L\'opez}
\address{%
Departament de Matem\`{a}tiques\\
Universitat Polit\`{e}cnica de Catalunya. BarcelonaTech\\
C/Esteve Terrades 5\\
08860 Castelldefels, Spain}
\email{susana.clara.lopez@upc.edu}

\author{F. A. Muntaner-Batle}
\address{Graph Theory and Applications Research Group \\
 School of Electrical Engineering and Computer Science\\
Faculty of Engineering and Built Environment\\
The University of Newcastle\\
NSW 2308
Australia}
\email{famb1es@yahoo.es}

\author{M. Prabu}
\address{British University Vietnam\\
Hanoi, Vietnam}
\email{mprabu201@gmail.com}

\maketitle

\begin{abstract}
In this paper we continue the study of the valences for (super) edge-magic labelings of crowns $C_{m}\odot \overline K_{n}$ and we prove that the crowns are perfect (super) edge-magic when $m=pq$ where $p$ and $q$ are different odd primes. We also provide a lower bound for the number of different valences of $C_{m}\odot \overline K_{n}$, in terms of the prime factors of $m$.
\end{abstract}

\begin{quotation}
\noindent{\bf Key Words}: {Edge-magic, super edge-magic, valence, perfect edge-magic, perfect super edge-magic}

\noindent{\bf 2010 Mathematics Subject Classification}:  Primary 05C78,
   Se\-con\-dary       05C76
\end{quotation}


\thispagestyle{empty}

\section{Introduction}
For the graph theory terminology and notation not defined in this paper we refer the reader to either one of the following sources \cite{BaMi,CH,G,Wa}.
However, in order to make this paper reasonably self-contained, we mention that by a $(p,q)$-graph we mean a graph of order $p$ and size $q$.
In 1970, Kotzig and Rosa \cite{K1} introduced the concepts of edge-magic graphs and edge-magic labelings as follows: Let $G$ be a $(p,q)$-graph. Then $G$ is called {\it edge-magic} if there is a bijective function $f:V(G)\cup E(G)\rightarrow \{i\}_{i=1}^{p+q}$ such that the sum $f(x)+f(xy)+f(y)=k$ for any $xy\in E(G)$. Such a function is called an {\it edge-magic labeling} of $G$ and $k$ is called the {\it valence} \cite{K1} or the {\it magic sum} \cite{Wa} of the labeling $f$. We write $val(f)$ to denote the valence of $f$.

Motivated by the concept of edge-magic labelings, Enomoto et al. \cite{E} introduced in 1998 the concepts of super edge-magic graphs and labelings as follows: Let $f:V(G)\cup E(G) \rightarrow\{i\}_{i=1}^{p+q}$ be an edge-magic labeling of a $(p,q)$-graph G with the extra property that $f(v)=\{i\}_{i=1}^{p}.$ Then G is called { \it super edge-magic} and $f$ is a {\it super edge-magic labeling} of $G$. It is worthwhile mentioning that Acharya and Hegde had already defined in \cite{AH} the concept of strongly indexable graph that turns out to be equivalent to the concept of super edge-magic graph. We take this opportunity to mention that although the original definitions of (super) edge-magic graphs and labelings were originally provided for simple graphs (that is to say, graphs with no loops nor multiple edges), in this paper, we understand these definitions for any graph. Therefore, unless otherwise specified, the graphs considered in this paper are not necessarily simple. In \cite{F2}, Figueroa-Centeno et al. provided the following useful characterization of super edge-magic simple graphs, that works in exactly the same way for graphs in general.

\begin{lemma}\label{super_consecutive} \cite{F2}
Let $G$ be a $(p,q)$-graph. Then $G$ is super edge-magic if and only if  there is a
bijective function $g:V(G)\longrightarrow \{i\}_{i=1}^p$ such
that the set $S=\{g(u)+g(v):uv\in E(G)\}$ is a set of $q$
consecutive integers.
In this case, $g$ can be extended to a super edge-magic labeling $f$ with valence $p+q+\min S$.
\end{lemma}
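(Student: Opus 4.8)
The plan is to prove the two implications directly, using the fact that the magic condition forces every edge label to satisfy $f(uv)=val(f)-f(u)-f(v)$, so a super edge-magic labeling is completely determined by its restriction to the vertices.

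First I would treat the forward implication. Suppose $f$ is a super edge-magic labeling of $G$ with valence $k$, and put $g=f|_{V(G)}$; by the definition of super edge-magic, $g$ is a bijection onto $\{i\}_{i=1}^{p}$, and consequently $f$ restricts to a bijection from $E(G)$ onto $\{i\}_{i=p+1}^{p+q}$. For each edge $uv$ the magic condition gives $g(u)+g(v)=k-f(uv)$. As $uv$ runs over the $q$ edges of $G$, the value $f(uv)$ runs bijectively over $\{i\}_{i=p+1}^{p+q}$, hence $k-f(uv)$ runs bijectively over $\{k-p-q,\,k-p-q+1,\,\dots,\,k-p-1\}$. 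Therefore $S=\{g(u)+g(v):uv\in E(G)\}$ is exactly this block of $q$ consecutive integers, and $\min S=k-p-q$, that is, $k=p+q+\min S$.

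For the converse, start from a bijection $g:V(G)\to\{i\}_{i=1}^{p}$ for which $S=\{g(u)+g(v):uv\in E(G)\}$ consists of $q$ consecutive integers, write $s=\min S$ so that $S=\{s,s+1,\dots,s+q-1\}$, and set $k=p+q+s$. Extend $g$ to $f$ by leaving the vertex labels unchanged and defining $f(uv)=k-g(u)-g(v)$ for each edge $uv$. The magic condition $f(u)+f(uv)+f(v)=k$ then holds by construction, and since $g(u)+g(v)\in S$ we get $f(uv)\in\{p+1,\dots,p+q\}$, a range disjoint from the vertex labels $\{1,\dots,p\}$. The one step that requires a genuine argument --- and the only real obstacle --- is showing that $f$ is \emph{injective} on the edges, since a priori two distinct edges might share the same vertex-sum. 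This is settled by a counting observation: by definition the assignment $uv\mapsto g(u)+g(v)$ is a surjection from $E(G)$ onto $S$, and $|E(G)|=q=|S|$, so it is in fact a bijection; hence $f(uv)=k-g(u)-g(v)$ takes $q$ distinct values and its image is all of $\{p+1,\dots,p+q\}$. Together with $g$ being a bijection onto $\{1,\dots,p\}$, this shows $f$ is a bijection onto $\{i\}_{i=1}^{p+q}$, so $f$ is a super edge-magic labeling, necessarily of valence $val(f)=k=p+q+\min S$.

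Beyond that single counting remark, the argument is pure bookkeeping, so I do not anticipate any further difficulty.
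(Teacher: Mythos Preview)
Your argument is correct and complete; both directions are handled cleanly, and the one nontrivial point---that the edge labels are distinct---is properly justified by the counting observation $|E(G)|=q=|S|$. There is nothing to compare against, however: the paper does not prove this lemma but merely quotes it from \cite{F2}, so your proposal actually supplies what the paper omits. The proof you give is the standard one for this characterization.
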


Enomoto et al. \cite{E} were the first ones to observe the following result for which we provide the proof as a matter of completeness.

\begin{lemma}\cite{E}\label{CnSEM}
A cycle of order $n$ is super edge-magic when $n$ is odd.
\end{lemma}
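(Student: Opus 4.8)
The plan is to apply Lemma~\ref{super_consecutive} directly: for a cycle $C_n$ with $n$ odd, I need to find a bijection $g:V(C_n)\to\{1,2,\dots,n\}$ whose induced edge-sums form a set of $n$ consecutive integers. Label the vertices of the cycle in cyclic order $v_1,v_2,\dots,v_n$ and try the assignment that alternates small and large labels, e.g. $g(v_{2i-1})=i$ for $1\le i\le (n+1)/2$ and $g(v_{2i})=(n+1)/2+i$ for $1\le i\le (n-1)/2$. Because $n$ is odd, walking around the cycle visits the odd-indexed vertices and then the even-indexed ones in a single interleaved pass, so this is a genuine bijection onto $\{1,\dots,n\}$.

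Next I would compute the multiset of edge-sums $g(v_j)+g(v_{j+1})$ (indices mod $n$) and check that, as $j$ runs over $1,\dots,n$, these values sweep out exactly $n$ consecutive integers with no repeats. The sums along consecutive edges increase by $1$ at each step in a predictable pattern: pairing a vertex with label $i$ to a vertex with label roughly $(n+1)/2+i$ gives sums near $(n+1)/2+2i$, and the ``wrap-around'' edge closing the cycle fills in the one remaining value. This is the routine computational core; one verifies $S=\{g(u)+g(v):uv\in E(C_n)\}$ is an interval of length $n$ by a short case analysis on the parity of the edge position. Having produced such a $g$, Lemma~\ref{super_consecutive} immediately extends it to a super edge-magic labeling with valence $p+q+\min S = 2n+\min S$, completing the proof.

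The main obstacle is purely bookkeeping: getting the indexing of the alternating labeling right so that the cycle's single closing edge does not break the consecutiveness of $S$, which is precisely where the hypothesis that $n$ is odd is used (for even $n$ the analogous alternating scheme produces a gap, consistent with the fact that even cycles are not super edge-magic). No deep idea is needed beyond choosing the labeling cleverly and trusting Lemma~\ref{super_consecutive} to do the rest.
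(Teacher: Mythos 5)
Your proposal is correct and is essentially the paper's own proof: the alternating assignment $g(v_{2i-1})=i$, $g(v_{2i})=(n+1)/2+i$ is exactly the canonical labeling the paper writes as $f(v_i)=(i+1)/2$ for $i$ odd and $f(v_i)=(i+1+n)/2$ for $i$ even, and both arguments conclude via Lemma~\ref{super_consecutive}. The edge-sum verification you sketch (sums $(n+1)/2+j+1$ for $j=1,\dots,n-1$ plus $(n+3)/2$ on the closing edge, giving $n$ consecutive integers) is the routine check the paper leaves implicit.
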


\begin{proof}
 Let $V(C_{n})=\{ v_i \}_{i=1}^{n}$ and $E(C_n)= \{v_iv_{i+1} \}_{i=1}^{n-1} \cup \{v_nv_1 \}$. The function $f:V(C_{n})\rightarrow \{i \}_{i=1}^{n}$ defined by the rule
 $$f(v_i)=\left\{\begin{array}{ll}
                   (i+1)/2, & \hbox{if} \ i \ \hbox{is odd} \\
                 (i+1+n)/2, & \hbox{if} \ i\ \hbox{is even}
                 \end{array}\right.$$
is a super edge-magic labeling of $C_{n}$.
\end{proof}

\begin{figure}
\begin{center}
  \includegraphics [width=80pt]{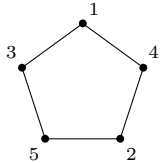}
  \end{center}
  \caption{A super edge-magic labeling of $C_{5}$.}\label{Fig1}
\end{figure}

We will refer to the labeling introduced in the proof of the previous lemma as the {\it canonical} labeling of the cycle. When we say that a digraph has a labeling we mean that its underlying graph has such labeling, see \cite{F1}. We denote the underlying graph of a digraph $D$ by $und(D)$.

\begin{remark}\label{strongorientationofcm}
Let $\{f(C_n)^+, f(C_n )^-\}$ be the strong orientations of the super edge-magic labeled cycle $C_n$ introduced in the proof of Lemma \ref{CnSEM}. Then
\begin{eqnarray} \label{conditionforcycle1}
(a,b) \in E(f(C_n)^+) \Leftrightarrow b-a &\equiv& \frac{n+1}{2} \ (mod \ n)
\end{eqnarray}
\begin{eqnarray} \label{conditionforcycle2}
(a,b) \in E(f(C_n)^-) \Leftrightarrow b-a &\equiv& \frac{n-1}{2} \ (mod \ n)
\end{eqnarray}
\end{remark}

Let $G=(V,E)$ be a $(p,q)$-graph, and denote by
$T_G$ the set
$$\left\{\frac{\sum_{u\in V}\mbox{deg}(u)g(u)+\sum_{e\in E}g(e)}q:\ g:V\cup E \rightarrow \{i\}_{i=1}^{p+q} \ \mbox{ is a bijective function}\right\}.$$
 If $\lceil\min T_G\rceil\le  \lfloor\max T_G\rfloor$ then the {\it magic interval} of $G$, denoted by $J_G$, is defined to be the set
$J_G=\left[\lceil\min T_G\rceil, \lfloor\max T_G\rfloor\right]\cap \mathbb{Z}$
and the {\it magic set} of $G$, denoted by $\tau_G$, is the set
$\tau_G=\{n\in J_G:\ n \ \mbox{is the valence of some edge-magic labeling of}\ G\}.$
It is clear that $\tau_G\subseteq J_G$. A graph $G$ is called {\it perfect edge-magic } \cite{PEM_LMR} if $\tau_G=J_G$.

A famous conjecture of Godbold and Slater \cite{GodSla98} states that, for $n=2t+1\ge 7$ and $5t+4\le j\le 7t+5$ and for $n=2t\ge 4$ and $5t+2\le j\le 7t+1$ there is an edge-magic labeling of $C_n$, with valence $k=j$. That is, for odd $n\ge 7$ and for even $n\ge 4$ the cycle $C_n$ is perfect edge-magic.

Let $G=(V,E)$ be a $(p,q)$-graph. Then the set $S_{G}$ is defined as $S_{G}= \{ 1/q( \Sigma_{u \in V}  deg(u)g(u)+ \Sigma_{i=p+1}^{p+q} i ):$ the function $g:V \rightarrow \{i\}_{i=1}^{p}$ is bijective\}. If $\lceil\min S_G\rceil\le  \lfloor\max S_G\rfloor$ then the {\it super edge-magic interval} of $G$, denoted by $I_G$, is defined to be the set
$I_G=\left[\lceil\min S_G\rceil, \lfloor\max S_G\rfloor\right]\cap \mathbb{Z}$
and the {\it super edge-magic set} of $G$, denoted by $\sigma_G$, is the set formed by all integers
$k\in I_G$ such that $k$ is the valence of some super edge-magic labeling of $G$.
A graph $G$ is called {\it perfect super edge-magic graph} \cite{LopMunRiu5} if $\sigma_G=I_G$.

\begin{lemma} \label{k1nsem}
The graph formed by a star $K_{1,n}$ and a loop attached to its central vertex, denoted by $K_{1,n}^{l}$, is perfect super edge-magic for all positive integers $n$. Furthermore, $|I_{K_{1,n}^{l}}|=|\sigma_{K_{1,n}^{l}}|=n+1$.
\end{lemma}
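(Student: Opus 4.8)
The plan is to combine a one-line computation of the super edge-magic interval with a family of labelings produced via Lemma~\ref{super_consecutive}. Denote by $c$ the central vertex of $K_{1,n}$, by $v_1,\dots,v_n$ its leaves, and by $e_0$ the loop at $c$. Then $K_{1,n}^{l}$ is a $(p,q)$-graph with $p=q=n+1$; moreover $\deg(v_i)=1$ for all $i$ and $\deg(c)=n+2$, since the loop contributes $2$ to the degree of $c$ (equivalently, the magic condition on $e_0$ reads $2f(c)+f(e_0)=k$).

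First I would determine $I_{K_{1,n}^{l}}$. For a bijection $g\colon V\to\{1,\dots,n+1\}$, substituting $\sum_{i=1}^{n}g(v_i)=\frac{(n+1)(n+2)}{2}-g(c)$ and $\sum_{i=p+1}^{p+q}i=\sum_{i=n+2}^{2n+2}i=\frac{(n+1)(3n+4)}{2}$ into the defining expression of $S_{K_{1,n}^{l}}$ gives, after dividing by $q=n+1$, the value $g(c)+2n+3$. Hence $S_{K_{1,n}^{l}}=\{2n+4,2n+5,\dots,3n+4\}\subseteq\mathbb Z$, so $I_{K_{1,n}^{l}}=[2n+4,3n+4]\cap\mathbb Z$ and $|I_{K_{1,n}^{l}}|=n+1$.

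Next I would hit every integer of this interval. Fix $j\in\{1,\dots,n+1\}$, set $g(c)=j$, and let $g$ map $\{v_1,\dots,v_n\}$ bijectively onto $\{1,\dots,n+1\}\setminus\{j\}$ (the choice of bijection is immaterial). The set of edge-sums is then $\{2j\}\cup\{\,j+g(v_i):1\le i\le n\,\}=\{2j\}\cup\bigl(\{j+1,j+2,\dots,j+n+1\}\setminus\{2j\}\bigr)=\{j+1,j+2,\dots,j+n+1\}$, which is a set of $q=n+1$ consecutive integers. By Lemma~\ref{super_consecutive}, $g$ extends to a super edge-magic labeling of $K_{1,n}^{l}$ with valence $p+q+\min S=(n+1)+(n+1)+(j+1)=2n+3+j$. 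As $j$ runs through $\{1,\dots,n+1\}$ the valence runs through all of $\{2n+4,\dots,3n+4\}=I_{K_{1,n}^{l}}$, so $\sigma_{K_{1,n}^{l}}=I_{K_{1,n}^{l}}$ and $|\sigma_{K_{1,n}^{l}}|=|I_{K_{1,n}^{l}}|=n+1$, as claimed.

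The only point that is not mere bookkeeping is the observation underlying the third paragraph: the loop contributes exactly the edge-sum $2g(c)$, and $2g(c)$ is precisely the integer missing from the progression $\{g(c)+g(v_i):1\le i\le n\}$ because $g(c)$ cannot be reused as a leaf label. This is what forces the set of edge-sums to be an unbroken block of consecutive integers no matter which label is placed on $c$, and hence what produces every valence in $I_{K_{1,n}^{l}}$.
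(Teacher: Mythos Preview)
Your proof is correct and follows essentially the same approach as the paper: both use Lemma~\ref{super_consecutive} to show that \emph{every} bijection $g:V(K_{1,n}^{l})\to\{1,\dots,n+1\}$ is a super edge-magic labeling, and both observe that the valence depends only on the label $g(c)$ of the central vertex, with consecutive choices of $g(c)$ yielding consecutive valences. The only difference is that you carry out the arithmetic explicitly---computing $I_{K_{1,n}^{l}}=[2n+4,3n+4]\cap\mathbb Z$ directly and writing out the edge-sum set $\{j+1,\dots,j+n+1\}$---whereas the paper leaves these as ``easy observations''; your version is in fact the fleshed-out form of the paper's sketch.
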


\begin{proof}
By Lemma \ref{super_consecutive}, it is a very easy observation that any bijection $f:V(K_{1,n}^{l}) \rightarrow \{j \}_{j=1}^{n+1}$ is a super edge-magic labeling of ${K_{1,n}^{l}}$. Further more, the valence of any super edge-magic labeling of $K_{1,n}^{l}$ depends only on the label assigned to the central vertex of $K_{1,n}^{l}$ (that is, the vertex of $K_{1,n}^{l}$ with degree different from 1). If two labelings of $K_{1,n}^{l}$ assign consecutive labels to the central vertex of $K_{1,n}^{l}$, then the resulting valences are also consecutive. Since there are exactly $(n+1)$ possible consecutive labels to assign to the central vertex, it follows that $|I_{K_{1,n}^{l}}|=|\sigma_{K_{1,n}^{l}}|=n+1$.
\end{proof}

\begin{figure}
\begin{center}
\includegraphics [width=215pt]{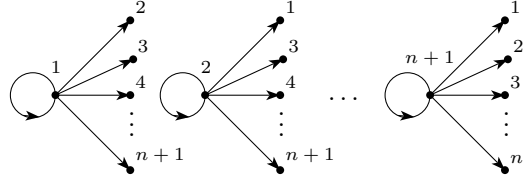}\\
\end{center}
  \caption{All possible super edge-magic labelings of an orientation of $K_{1,n}^{l}.$}\label{Fig2}
\end{figure}

The corona product of two graphs $G$ and $H$ is the graph $G \odot H$ obtained by placing a copy of $G$ and $|V(G)|$ copies of $H$ and then joining each vertex of $G$ with all vertices in one copy of $H$ in such a way that all vertices in the same copy of $H$ are joined exactly to one vertex of $G$. Let $\overline{K}_n$ be the complementary graph of the complete graph $K_n$, $n\in \mathbb{N}$.

\begin{theorem}\cite{LopMunRiu5,PEM_LMR} \label{powerofprime}
Let $C_m$ be a cycle of order $m=p^k$, where $p>2$ is a prime number. Then the graph $G \cong C_m \odot \overline{K}_n$ is perfect (super) edge-magic.
\end{theorem}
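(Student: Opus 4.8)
The plan is to establish the super edge-magic statement first and then deduce the edge-magic one. Put $N=m(n+1)$, so that $G$ is an $(N,N)$-graph. By Lemma~\ref{super_consecutive} a super edge-magic labeling of $G$ is precisely a bijection $g\colon V(G)\to\{1,\dots,N\}$ whose edge-sum set $S_g=\{g(u)+g(v):uv\in E(G)\}$ is a block of $N$ consecutive integers, and then the corresponding valence is $2N+\min S_g$. A short computation of $S_G$ — using that $m$ is odd, so $\tfrac{m+1}{2}$ and $\tfrac{m-1}{2}$ are integers — gives
$$I_G=\Bigl[\,2N+1+\tfrac{m+1}{2},\ \ 3N+1-\tfrac{m-1}{2}\,\Bigr]\cap\mathbb Z,\qquad |I_G|=mn+1,$$
and, more usefully, that $\min S_g=1+\tfrac1m\sum_{v\in V(C_m)}g(v)$, so the valence of $g$ equals $2N+1+\tfrac1m\sum_{v\in V(C_m)}g(v)$. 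Hence a super edge-magic labeling of valence $2N+1+t$ exists if and only if there is one that assigns to the $m$ cycle vertices a set of labels of sum $mt$, and it remains to realize every integer $t$ with $\tfrac{m+1}{2}\le t\le N-\tfrac{m-1}{2}$.

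To produce these labelings I would start from the canonical labeling of $C_m$ of Lemma~\ref{CnSEM}, together with its strong orientations described in Remark~\ref{strongorientationofcm}, and translate it by $a:=t-\tfrac{m+1}{2}$, so that the cycle vertices carry exactly the labels $\{a+1,\dots,a+m\}$ and their $m$ edge sums form a block of $m$ consecutive integers lying inside the target interval of length $N$. Reading each cycle vertex together with its $n$ pendant neighbours as a copy of the gadget $K_{1,n}^{l}$ of Lemma~\ref{k1nsem} — with the loop now playing the role of a cycle edge — what is left is to distribute the remaining $mn$ labels over the pendant vertices so that the $mn$ pendant edge sums occupy exactly the positions of the target interval not already used by the $m$ cycle edges. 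This is a simultaneous block-completion problem for the $m$ pendant classes, and it amounts to cutting an interval of integers into $m$ pieces whose behaviour modulo $m$ is mutually compatible.

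I expect this modular completion to be the genuine obstacle, and it is also where $m=p^{k}$ is used. When $m$ is a prime power the relevant arithmetic modulo $m$ is essentially one-dimensional — the subgroups of $\mathbb Z_{p^{k}}$ form a chain — so the single translation parameter $a$, together with the internal freedom of each copy of $K_{1,n}^{l}$ (any bijection onto $n+1$ consecutive integers is super edge-magic, by Lemma~\ref{k1nsem}), already suffices to drive all $mn$ pendant sums into the required slots with no two of the $m$ local constraints interfering. If instead $m$ had two distinct prime factors, the structure modulo $m$ would be a genuine product $\mathbb Z_{p}\times\mathbb Z_{q}$, one parameter would no longer be enough, and the two coordinates would have to be reconciled through a Chinese Remainder Theorem argument — which is exactly the extra work carried out in the present paper and the reason Theorem~\ref{powerofprime} is stated only for prime powers.

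Finally, for the plain edge-magic conclusion: $G$ is super edge-magic, hence edge-magic, and computing $T_G$ (which again depends only on $\sum_{v\in V(C_m)}g(v)$, the edge-label contributions cancelling) gives $J_G=\bigl[\,2N+1+\tfrac{m+1}{2},\ 4N+1-\tfrac{m-1}{2}\,\bigr]\cap\mathbb Z$. The super edge-magic labelings above already realize the lower part $I_G$ of $J_G$; applying the complementation $f\mapsto\bar f$ with $\bar f(x)=2N+1-f(x)$ turns each into an edge-magic labeling of valence $6N+3-val(f)$ and so sweeps out the top part of $J_G$, leaving only a central gap of exactly $m-1$ consecutive values. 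Those $m-1$ remaining valences are obtained by the same translate-and-complete construction, now allowing $C_m$ to carry labels near $N$ inside $\{1,\dots,2N\}$ and placing the edge labels afterwards. Combining the three families yields $\tau_G=J_G$, so $G$ is perfect edge-magic as well.
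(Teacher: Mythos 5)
Your interval computations and the three--stage plan (realize all of $I_G$, complement to get the top of $J_G$, then fill the middle) are correct, but the two steps that actually constitute the theorem are only asserted, not proved. The "simultaneous block-completion'' step -- distributing the remaining $mn$ labels over the pendant vertices so that the pendant sums land exactly in the unused slots -- is the whole content of the super edge-magic claim, and your justification ("the subgroups of $\mathbb{Z}_{p^k}$ form a chain, so the single translation parameter suffices'') is never turned into an argument: you do not say which congruences must be satisfied, nor why primality of $p$ resolves them. In the construction this paper builds on (Section \ref{section: main} and the cited sources), the hypothesis $m=p^k$ enters at one precise and quite different point: after translating the induced labeling by $r-1$, the block $\{r,\dots,r+m-1\}$ of labels carries a $2$-regular graph which is a circulant of difference $(m+1)/2-(r-1)$ or $(m-1)/2-(r-1)$ modulo $m$, according to which strong orientation of the labeled cycle one starts from (Lemmas \ref{lemma: A^r and B^r, val(f^r)} and \ref{lemma: S(A_f^r) is a cycle}); it is a single $m$-cycle iff the corresponding quantity is coprime to $m$, and since the two candidates differ by $1$ they cannot both be divisible by $p$, so for every $r$ one of the two orientations yields a super edge-magic labeling of $C_m\odot\overline K_n$ with valence $\hbox{val}(f_1)+r-1$, covering all of $I_G$. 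Note also that this construction lets the adjacency pattern on the label block vary with $r$, whereas you freeze it to the translated canonical one; if you insist on that, you must prove your fixed-pattern completion exists for every $t$, which you have not done and which is not obviously true.

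The final step has the same problem: the claim that the central gap of $m-1$ valences is obtained "by the same translate-and-complete construction, now allowing $C_m$ to carry labels near $N$'' is not a construction. The standard device (used in this paper for $m=pq$) is that $G$ has equal order and size, so the odd and even labelings of Lemma \ref{lemma: oddevenlabelings} apply: from the consecutive run of super edge-magic valences already obtained (together with complementary super edge-magic labelings where needed), one gets edge-magic valences $2\hbox{val}(f)-2p-2$ and $2\hbox{val}(f)-2p-1$ forming a consecutive run from $2mn+3m+1$ to $4mn+3m+2$, which contains the missing interval between $3mn+(5m+3)/2$ and $3mn+(7m+3)/2$. Without either this tool or an explicit replacement, your proof of perfect edge-magicness is incomplete even granting the super edge-magic part.
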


In this paper, we extend the result to $m=pq$, where $p$ and $q$ are different odd primes.  The paper is organized as follows: in Section 2, we provide all the necessary results needed for this paper. In Section 3, we prove that each element in the family $C_m\odot\overline{K}_n$ where $m=pq$, with  $p$ and $q$ being different odd primes is a perfect (super) edge-magic graph. In Section 4, we provide a lower bound for the number of valences of general crowns $C_m\odot\overline{K}_n$.

\section{The tools}

Let $f$ be an edge-magic labeling of a $(p,q)$-graph $G$. The {\it complementary labeling} of $f$, denoted by $\overline{f}$, is the labeling defined by the rule: $\overline{f}(x)=p+q+1-f(x)$, for all $x\in V(G)\cup E(G)$. Notice that, if $f$ is an edge-magic labeling of $G$, we have that $\overline{f}$ is also an edge-magic labeling of $G$ with valence $\hbox{val}(\overline{f}) = 3(p+q+1)-\hbox{val}(f)$. In the case of a super edge-magic labeling $f$ of a graph $G$, there is also the corresponding \textit{super edge-magic complementary labeling}, $f^{c}$, which is also super edge-magic. In this case $f^{c}$ is defined by the rule $ f^{c}(x) = p+1-f(x), \ \forall x \in V(G)$ and
  $f^{c} (ab)$ is obtained as described in Lemma \ref{super_consecutive}, for all $ab \in E(G)$. Then, the valence of $f^{c}$ can be expressed in terms of the valence of $f$ as follows:
  \begin{eqnarray}\label{formula: valence of the super complentary}
  \hbox{val}(f^{c})&=&4p+q+3-\hbox{val}(f).
  \end{eqnarray}

The complementary labeling of an edge-magic labeling is a powerful tool that allows us to increase the number of valences of certain families of graphs dramatically. Using the complementary labeling we may even prove the perfect edge-magicness of many graphs. The following proposition can serve as an illustration of this fact.

\begin{proposition}\label{propo: K_1_n_PEM}
The graph $K_{1,n}^l$ is perfect edge-magic  for all positive integers. Furthermore, $|J_{K_{1,n}^{l}}|=|\tau_{K_{1,n}^{l}}|=2n+2$.
\end{proposition}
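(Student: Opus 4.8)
The plan is to combine the super edge-magic result of Lemma~\ref{k1nsem} with the complementary labeling construction described just above, and then check that together the two families of valences fill the whole magic interval $J_{K_{1,n}^{l}}$. First I would record the basic parameters: $K_{1,n}^{l}$ has $p=n+1$ vertices and $q=n+1$ edges ($n$ star edges plus one loop). By Lemma~\ref{k1nsem} the super edge-magic valences form the full interval $I_{K_{1,n}^{l}}$, a block of $n+1$ consecutive integers; by Lemma~\ref{super_consecutive} these valences are $p+q+\min S$ as the label of the central vertex runs over $\{1,\dots,n+1\}$, so one computes them explicitly as an arithmetic progression of length $n+1$ and common difference $1$. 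Every super edge-magic labeling is in particular an edge-magic labeling, so all $n+1$ of these values already lie in $\tau_{K_{1,n}^{l}}$.

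Next I would apply the (ordinary) complementary labeling $\overline{f}$, which sends a valence $k$ to $3(p+q+1)-k = 3(2n+3)-k$. Applying this to each of the $n+1$ super edge-magic valences produces a second block of $n+1$ consecutive integers, the reflection of the first block about the point $\tfrac{3}{2}(2n+3)$. The key arithmetic step is to verify that these two blocks are disjoint and \emph{adjacent}, i.e. that together they form a single run of $2n+2$ consecutive integers with no gap and no overlap; this is where I expect the only real computation, and it amounts to checking that the largest super edge-magic valence and the image under $\overline{\,\cdot\,}$ of the largest super edge-magic valence differ by exactly $n+1$ (or symmetrically, that the smallest ones do). Granting this, $\tau_{K_{1,n}^{l}}$ contains a block of $2n+2$ consecutive integers.

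Finally I would show that this block is exactly $J_{K_{1,n}^{l}}$. Since $\tau_{K_{1,n}^{l}}\subseteq J_{K_{1,n}^{l}}$ always, it suffices to prove $|J_{K_{1,n}^{l}}|\le 2n+2$; equivalently $\lfloor\max T_G\rfloor - \lceil\min T_G\rceil \le 2n+1$. Here $T_G$ is the set of averaged weighted sums $\tfrac1q\bigl(\sum_{u}\deg(u)g(u)+\sum_{e}g(e)\bigr)$ over all bijections $g:V\cup E\to\{1,\dots,p+q\}$; because $q=n+1$ and the degree sequence is very rigid (one vertex of degree $n+2$ counting the loop contribution, $n$ vertices of degree $1$, and the edge labels contributing with coefficient $1$), $\min T_G$ and $\max T_G$ are attained at the obvious greedy assignments and can be evaluated in closed form. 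Comparing the floor of the max with the ceiling of the min gives the bound $2n+2$, and combined with the previous paragraph this forces $\tau_{K_{1,n}^{l}}=J_{K_{1,n}^{l}}$ with $|J_{K_{1,n}^{l}}|=|\tau_{K_{1,n}^{l}}|=2n+2$. The main obstacle is purely bookkeeping: making sure the super edge-magic block and its complementary reflection abut perfectly and that the endpoints of $J_{K_{1,n}^{l}}$ match the endpoints of the union, with no off-by-one slippage in the floors and ceilings.
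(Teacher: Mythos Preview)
Your approach is essentially identical to the paper's: you use the $n+1$ super edge-magic valences from Lemma~\ref{k1nsem} together with the complementary labeling $\overline f$ (valence $3(2n+3)-\hbox{val}(f)$) to cover the entire magic interval, and you compute $J_{K_{1,n}^{l}}$ directly. The paper simply carries out the arithmetic explicitly---$I_{K_{1,n}^{l}}=\{2n+4,\ldots,3n+4\}$, its complementary image $\{3n+5,\ldots,4n+5\}$, and $J_{K_{1,n}^{l}}=[2n+4,4n+5]\cap\mathbb{Z}$---so your hedged ``key arithmetic step'' and ``off-by-one'' worries dissolve once you write down the numbers.
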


\begin{proof}
Easy calculations show that $J_{K_{1,n}^{l}}=[2n+4,4n+5]\cap \mathbb{Z}$. In Lemma \ref{k1nsem} it is shown that all numbers in the set $\{2n+4,2n+5,\ldots, 3n+4\}$ are in $\tau_{K_{1,n}^{l}}$. Now, using the fact that if $f$ is a super edge-magic labeling of $K_{1,n}^l$ then $\hbox{val}(\overline f)=3(2n+3)-\hbox{val}(f)$, we obtain that $\{3n+5,3n+6,\ldots, 4n+5\}\subseteq\tau_{K_{1,n}^{l}}$, showing the result.
\end{proof}

Also, in the case that $G$ is a graph of equal order and size, new edge-magic labelings can be obtained from known super edge-magic labelings of $G$.
  The \textit{odd labeling} and the \textit{even labeling} \cite{PEM_LMR} obtained from $f$, denoted respectively by $o(f)$ and $e(f)$, are the labelings $o(f), e(f):V(G)\cup E(G)\rightarrow \{i\}_{i=1}^{p+q}$ defined as follows: (i) on the vertices: $o(f)(x)=2f(x)-1$ and $e(f)(x)=2f(x)$, for all $x \in V(G)$, (ii) on the edges: $o(f)(xy) = 2 \hbox{val}(f)-2p-2-o(f)(x)-o(f)(y)$ and $e(f)(xy)=2 \hbox{val}(f)-2p-1-e(f)(x)-e(f)(y)$, for all $xy \in E(G)$.

\begin{lemma} \cite{PEM_LMR}\label{lemma: oddevenlabelings}
Let G be a $(p,q)$-graph with $p=q$ and let $f:V(G)\cup E(G)\rightarrow \{i\}_{i=1}^{p+q}$ be a super edge-magic labeling of $G$. Then, the odd labeling $o(f)$ and the even labeling $e(f)$ obtained from $f$ are edge-magic labelings of G with valences $\hbox{val}(o(f))=2\hbox{val}(f)-2p-2$ and $\hbox{val}(e(f))=2\hbox{val}(f)-2p-1$ respectively.
\end{lemma}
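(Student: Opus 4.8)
The plan is to check directly that $o(f)$ and $e(f)$ are bijective labelings of $G$ onto $\{i\}_{i=1}^{2p}$; once that is done, the edge-magic property and the stated valences are forced by the very formulas that define $o(f)$ and $e(f)$ on the edges, so no further work is needed.

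First I would translate the super edge-magicness of $f$ into a statement about vertex sums. Put $k=\mathrm{val}(f)$. Since $f(V(G))=\{i\}_{i=1}^{p}$, necessarily $f(E(G))=\{i\}_{i=p+1}^{2p}$, and from $f(x)+f(xy)+f(y)=k$ we read off that $\{f(x)+f(y):xy\in E(G)\}=\{k-f(xy):xy\in E(G)\}=\{k-2p,k-2p+1,\ldots,k-p-1\}$, i.e.\ the set of edge-sums is a block of $p=q$ consecutive integers, each attained exactly once (this is just Lemma~\ref{super_consecutive} read in reverse).

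Next I would handle $o(f)$. On the vertices $o(f)(x)=2f(x)-1$ runs bijectively over the odd integers in $\{i\}_{i=1}^{2p}$. Substituting these vertex values into the definition of $o(f)$ on the edges and simplifying gives $o(f)(xy)=2k-2p-2\bigl(f(x)+f(y)\bigr)$, so as $f(x)+f(y)$ sweeps the consecutive block $\{k-2p,\ldots,k-p-1\}$ the value $o(f)(xy)$ sweeps $\{2,4,\ldots,2p\}$, the even integers in $\{i\}_{i=1}^{2p}$, each exactly once. Hence $o(f)$ is a bijection $V(G)\cup E(G)\to\{i\}_{i=1}^{2p}$, and by construction $o(f)(x)+o(f)(xy)+o(f)(y)=2k-2p-2$ on every edge, so $o(f)$ is edge-magic with valence $2\mathrm{val}(f)-2p-2$. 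The argument for $e(f)$ is identical with the roles of the two parities interchanged: $e(f)(x)=2f(x)$ occupies the even labels, one computes $e(f)(xy)=2k-2p-1-2\bigl(f(x)+f(y)\bigr)$, which then occupies the odd labels $\{1,3,\ldots,2p-1\}$, and the magic sum is $2\mathrm{val}(f)-2p-1$.

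I do not anticipate a genuine obstacle. The only point needing attention is verifying that the induced edge labels land exactly in the complementary parity class inside $\{1,\ldots,2p\}$ (in particular that they are at least $1$ and at most $2p$), and this is precisely what the ``$q$ consecutive integers'' feature of a super edge-magic labeling supplies. It is worth noting where the hypothesis $p=q$ enters: it is what makes the odd labels and the even labels in $\{1,\ldots,2p\}$ each exactly $p$ in number, so that they can be matched bijectively to the $p$ vertices and the $q=p$ edges.
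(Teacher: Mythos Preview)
Your argument is correct: the direct verification that $o(f)$ sends vertices to the odd labels and edges to the even labels (and vice versa for $e(f)$), using that the edge-sums $f(x)+f(y)$ form a block of $p$ consecutive integers, is exactly what is needed, and your arithmetic checks out. The paper itself does not supply a proof of this lemma---it is quoted from \cite{PEM_LMR}---so there is no in-paper argument to compare against; what you have written is the natural and presumably intended verification.
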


At this point, we want to observe that Proposition \ref{propo: K_1_n_PEM} can also be proved using the labelings provided in the proof of Lemma \ref{k1nsem} and the odd and even labelings just defined above.

In \cite{F1}, Figueroa et al. defined the following product: Let $D$ be a digraph and let $\Gamma$ be a family of digraphs with the same set $V$ of vertices. Assume that $h: E(D) \to \Gamma$ is any function that assigns elements of $\Gamma$ to the arcs of $D$. Then the digraph $D \otimes _{h} \Gamma $ is defined by (i) $V(D \otimes _{h} \Gamma)= V(D) \times V$ and (ii) $((a,i),(b,j)) \in E(D \otimes _{h} \Gamma) \Leftrightarrow (a,b) \in E(D)$ and $(i,j) \in E(h(a,b))$. Note that when $h$ is constant, $D \otimes _{h} \Gamma$ is the Kronecker product. Many relations among labelings have been established using the $\otimes_h$-product and some particular families of graphs, namely $\mathcal{S}_p$ and $\mathcal{S}_p^k$ (see for instance, \cite{ILMR,LopMunRiu1,LopMunRiu6,PEM_LMR2}).
The family $\mathcal{S}_p$ contains all super edge-magic $1$-regular labeled digraphs of order $p$ where each vertex takes the name of the label that has been assigned to it. A super edge-magic digraph $F$ is in $\mathcal{S}_p^k$ if $|V(F)|= |E(F)|=p$ and the minimum sum of the labels of the adjacent vertices is equal to $k$ (see Lemma \ref{super_consecutive}). Notice that, since each $1$-regular digraph has minimum edge induced sum equal to $(p+3)/2$, it follows that $ \mathcal{S}_p \subset \mathcal{S}_p^{(p+3)/2}$. The following result was introduced in \cite{LopMunRiu6}, generalizing a previous result found in \cite{F1} :

\begin{theorem} \label{spk} \cite{LopMunRiu6}
Let $D$ be a (super) edge-magic digraph and let $h: E(D) \to \mathcal{S}_p^k$ be any function. Then $D\otimes _{h} \mathcal{S}_p^k$ is (super) edge-magic.
\end{theorem}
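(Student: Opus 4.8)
The plan is to handle both the edge-magic and the super edge-magic statements with a single ``block substitution'' construction, reading off the super property at the end as a bonus. Fix notation: let $D$ be a $(p_1,q_1)$-digraph carrying an edge-magic labeling $f_D\colon V(D)\cup E(D)\to\{i\}_{i=1}^{p_1+q_1}$ of valence $\mathrm{val}(f_D)$, and write $f_D(a,b)$ for the $f_D$-label of the arc $(a,b)$. By the very definition of $\mathcal{S}_p^k$, each $h(a,b)$ is a super edge-magic digraph on the vertex set $\{i\}_{i=1}^{p}$ with each vertex bearing its own label and with minimum adjacent-vertex sum $k$; applying Lemma~\ref{super_consecutive} to the identity vertex labeling of $h(a,b)$, it extends to a super edge-magic labeling $\phi_{ab}$ with edge labels exhausting $\{p+1,\dots,2p\}$ and valence $2p+k$, so that $i+j+\phi_{ab}(i,j)=2p+k$ for every $(i,j)\in E(h(a,b))$. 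In particular $|E(h(a,b))|=p$, so $D\otimes_h\mathcal{S}_p^k$ is a graph with $p_1p$ vertices and $q_1p$ edges.

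Next, I would partition $\{1,\dots,p(p_1+q_1)\}$ into the consecutive length-$p$ blocks $B_x=\{p(x-1)+1,\dots,px\}$, one for each label $x\in\{1,\dots,p_1+q_1\}$, and define a labeling $F$ of $D\otimes_h\mathcal{S}_p^k$ by placing, for each vertex $a$ of $D$, the $p$ copies $(a,1),\dots,(a,p)$ into the block $B_{f_D(a)}$ via $F(a,i)=p(f_D(a)-1)+i$, and placing, for each arc $(a,b)$ of $D$, the $p$ arcs $((a,i),(b,j))$ it spawns into the block $B_{f_D(a,b)}$ via $F\big((a,i),(b,j)\big)=p(f_D(a,b)-1)+\phi_{ab}(i,j)-p$. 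Since $f_D$ is a bijection onto $\{1,\dots,p_1+q_1\}$ the blocks are pairwise disjoint and exhaust $\{1,\dots,p(p_1+q_1)\}$, while inside a fixed block the assignments $i\mapsto i$ and $(i,j)\mapsto\phi_{ab}(i,j)-p$ are bijections onto $\{1,\dots,p\}$; hence $F$ is a bijection. A short telescoping then gives, for every arc $((a,i),(b,j))$ of the product,
\[
F(a,i)+F\big((a,i),(b,j)\big)+F(b,j)=p\,\mathrm{val}(f_D)-2p+k
\]
(using $f_D(a)+f_D(a,b)+f_D(b)=\mathrm{val}(f_D)$ and $i+j+\phi_{ab}(i,j)=2p+k$), which is independent of the arc, so $D\otimes_h\mathcal{S}_p^k$ is edge-magic. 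If moreover $f_D$ is super edge-magic, then $f_D$ carries $V(D)$ onto $\{1,\dots,p_1\}$, so the vertex blocks are exactly $B_1,\dots,B_{p_1}$ and $F$ carries $V(D\otimes_h\mathcal{S}_p^k)$ onto $\{1,\dots,p_1p\}$; thus $F$ is super edge-magic, giving the parenthetical version of the statement.

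I do not expect a genuine analytic obstacle here: the content is entirely in setting up the right bijection, and everything afterwards is bookkeeping. The one thing that must be gotten exactly right is that the block size is forced to be $p$ in two independent ways at once — it must equal the number of copies of each vertex of $D$, and it must equal the number of arcs each arc of $D$ contributes, namely $|E(h(a,b))|$, which is $p$ precisely because $h(a,b)\in\mathcal{S}_p^k$; were $h$ allowed to hit families with differing edge counts, $F$ would fail to be a bijection. The second point to watch is that the summand $i+j+\phi_{ab}(i,j)$ is the same constant $2p+k$ for every arc of $D$, and this uniformity is exactly what the common value of $k$ in $\mathcal{S}_p^k$ buys. (Alternatively, the super case follows directly from Lemma~\ref{super_consecutive}: from a vertex labeling $g_D$ of $D$ with consecutive edge sums, set $g(a,i)=p(g_D(a)-1)+i$; the edge sums of the product form a union of $q_1$ length-$p$ integer intervals whose successive translates differ by $p$, hence a block of $q_1p$ consecutive integers — the coincidence ``gap $=p=$ interval length'' being, once again, where $h(a,b)\in\mathcal{S}_p^k$ is used.)
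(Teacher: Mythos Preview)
Your proof is correct and follows essentially the same approach as the paper: Remark~\ref{remarkspk} outlines precisely this block construction, assigning $(a,i)$ the label $p(a-1)+i$ and the arc $((a,i),(b,j))$ the label $p(e-1)+(k+p)-(i+j)$, which is exactly your $p(f_D(a,b)-1)+\phi_{ab}(i,j)-p$ once one expands $\phi_{ab}(i,j)=2p+k-(i+j)$. Your valence $p\,\mathrm{val}(f_D)-2p+k$ agrees with the paper's $p(\mathrm{val}(f)-3)+k+p$ in Lemma~\ref{valenceinducedproduct}, and you are more explicit than the paper about why $F$ is a bijection and why the hypothesis that every $h(a,b)$ has exactly $p$ edges with common minimum sum $k$ is needed.
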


\begin{remark}\label{remarkspk}
The key point in the proof of Theorem \ref{spk} is to rename the vertices of $D$ and each element of $\mathcal{S}_p^k$ after the labels of their corresponding (super) edge-magic labeling $f$ and their super edge-magic labelings respectively. Then the labels of the product are defined as follows: (i) the vertex $(a,i) \in V(D\otimes _{h} \mathcal{S}_p^k)$ receives the label: $p(a-1)+i$ and (ii) the arc $((a,i),(b,j)) \in E(D\otimes _{h} \mathcal{S}_p^k)$ receives the label: $p(e-1)+(k+p)-(i+j)$, where $e$ is the label of $(a,b)$ in D. Thus, for each arc $((a,i),(b,j)) \in E(D\otimes _{h} \mathcal{S}_p^k)$, coming from an arc $ e = (a,b) \in E(D)$ and an arc $ (i,j) \in E(h(a,b))$, the sum of labels is constant and equal to $p(a+b+e-3)+(k+p)$. That is, $p( \hbox{val}(f)-3)+k+p$. Thus, the next result is obtained.
\end{remark}

\begin{lemma}\label{valenceinducedproduct} \cite{LopMunRiu6}
Let $\hat{f}$ be the (super) edge-magic labeling of the graph $D \otimes_h\mathcal{S}_p^k$ induced by a (super) edge-magic labeling $f$ of $D$ (see Remark \ref{remarkspk}). Then the valence of $\hat{f}$ is given by the formula
 \begin{eqnarray}
\hbox{val}(\hat{f}) &=& p(\hbox{val}(f)-3) + k + p.
 \end{eqnarray}

\end{lemma}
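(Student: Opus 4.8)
The plan is to read the formula directly off the explicit description of $\hat f$ recorded in Remark \ref{remarkspk}, and then carry out the resulting edge-sum computation. First I would recall the renaming conventions: the vertices and arcs of $D$ are renamed by the values that the (super) edge-magic labeling $f$ assigns to them, so that a vertex called $a$ satisfies $f(a)=a$ and an arc called $e$ satisfies $f(e)=e$, and similarly the vertices of each member of $\mathcal{S}_p^k$ are renamed after their own super edge-magic labelings. With these conventions, Remark \ref{remarkspk} gives $\hat f\bigl((a,i)\bigr)=p(a-1)+i$ on the vertices of $D\otimes_h\mathcal{S}_p^k$ and $\hat f\bigl(((a,i),(b,j))\bigr)=p(e-1)+(k+p)-(i+j)$ on its arcs, where $e$ is the name of the arc $(a,b)\in E(D)$. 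That $\hat f$ is indeed a (super) edge-magic labeling is exactly the content of Theorem \ref{spk}, so this can simply be invoked; all that remains is to identify its magic sum.

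Next I would take an arbitrary arc $((a,i),(b,j))\in E(D\otimes_h\mathcal{S}_p^k)$. By the definition of the $\otimes_h$-product this arc arises from an arc $e=(a,b)\in E(D)$ together with an arc $(i,j)\in E(h(a,b))$, and $\hat f$ labels the underlying graph. Adding the labels of its two endpoints and of the arc itself gives
\[
\bigl(p(a-1)+i\bigr)+\bigl(p(b-1)+j\bigr)+\bigl(p(e-1)+(k+p)-(i+j)\bigr),
\]
and here the contributions $i$ and $j$ coming from the $\mathcal{S}_p^k$-factor cancel, leaving $p(a-1)+p(b-1)+p(e-1)+(k+p)=p(a+b+e-3)+(k+p)$. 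In particular this quantity is independent of the chosen arc of $h(a,b)$ and of which arc of $D$ was picked, except through the value of the sum $a+b+e$.

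Finally I would invoke that $f$ is edge-magic on $D$: with the renaming convention, $a+b+e=f(a)+f(e)+f(b)=\hbox{val}(f)$ for every arc $e=(a,b)$ of $D$. Substituting this into the expression above yields $\hbox{val}(\hat f)=p(\hbox{val}(f)-3)+k+p$, as claimed. I do not expect a genuine obstacle here; the only points requiring care are the bookkeeping forced by the two renaming steps and the observation that the edge labels of the factor graphs drop out of the sum — which is precisely the mechanism that makes the $\otimes_h$-product preserve the magic property in Theorem \ref{spk}.
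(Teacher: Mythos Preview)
Your proposal is correct and follows essentially the same route as the paper: the paper derives the lemma directly from Remark \ref{remarkspk} by summing the three labels $p(a-1)+i$, $p(b-1)+j$, and $p(e-1)+(k+p)-(i+j)$, observing the cancellation of $i+j$, and then substituting $a+b+e=\hbox{val}(f)$. Your write-up is simply a more detailed version of exactly this computation.
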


To prove the main result, we need some technical lemmas. The next lemma was proved in \cite{LopMunRiu5}.

\begin{lemma}\label{alphap}
\cite{LopMunRiu5} Let $p$ and $q$ be odd coprime numbers. Then there exist integers $\alpha$ and $\beta$ with $1 = \alpha p + \beta q $ and $max \{|\alpha p| , |\beta q|\} \leq (pq+1)/2 $.
\end{lemma}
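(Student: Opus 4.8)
The plan is to start from Bézout's identity and then translate the coefficients along the solution lattice until they satisfy the size constraint. Since $p$ and $q$ are coprime there exist integers $\alpha_0,\beta_0$ with $1=\alpha_0 p+\beta_0 q$, and every integer solution of $1=\alpha p+\beta q$ has the form $\alpha=\alpha_0+tq$, $\beta=\beta_0-tp$ for some $t\in\mathbb Z$. The quantity I would control first is $\alpha p=\alpha_0 p+tpq$: as $t$ runs over $\mathbb Z$, this runs over the entire residue class of $\alpha_0 p$ modulo $pq$, so I can choose $t$ so that $\alpha p$ becomes the representative of that class of least absolute value.

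This is exactly where the hypothesis that $p$ and $q$ are odd is used. The product $pq$ is odd, so $\{-(pq-1)/2,\ldots,-1,0,1,\ldots,(pq-1)/2\}$ is a complete system of residues modulo $pq$, symmetric about $0$. Hence the choice of $t$ above gives $|\alpha p|\le (pq-1)/2$, which is in particular at most $(pq+1)/2$.

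It then remains to bound $|\beta q|$ for the same $t$. From $1=\alpha p+\beta q$ we get $\beta q=1-\alpha p$, whence $|\beta q|\le 1+|\alpha p|\le 1+(pq-1)/2=(pq+1)/2$. Combining the two estimates yields $\max\{|\alpha p|,|\beta q|\}\le (pq+1)/2$, as claimed; note that with this argument the bound for $|\alpha p|$ is in fact the slightly sharper $(pq-1)/2$, and only the $|\beta q|$ term needs the full $(pq+1)/2$.

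I do not expect any genuine obstacle: this is a routine sharpening of Bézout's identity. The one point that must not be glossed over is the parity hypothesis — it is precisely the oddness of $pq$ that makes the least-absolute-value residue system symmetric with the clean bound $(pq-1)/2$, and hence allows the extra $+1$ arising from $\beta q=1-\alpha p$ to be absorbed into the stated bound $(pq+1)/2$. (Had $pq$ been even one would have to worry about the ambiguous residue $pq/2$, and the constant in the bound would have to be adjusted accordingly.)
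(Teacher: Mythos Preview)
Your argument is correct: parametrise the B\'ezout solutions as $\alpha=\alpha_0+tq$, pick $t$ so that $\alpha p$ lies in the symmetric residue system $\{-(pq-1)/2,\dots,(pq-1)/2\}$ (which is a complete system precisely because $pq$ is odd), and then bound $|\beta q|=|1-\alpha p|$ by the triangle inequality. The paper does not give its own proof of this lemma here; it is quoted from \cite{LopMunRiu5}, so there is nothing further to compare against.
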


The following lemma was partially proved in \cite{LopMunRiu5}.

\begin{lemma}\label{xxprime}
Let $p$ and $q$ be different odd primes. Then, there exists an integer $x$ with $1 < x < pq$ such that $\hbox{gcd}(x,pq)\neq 1 $, $\hbox{gcd}(x-1,pq)\neq 1 $. Moreover, if there exists a different $x'$ with $1 < x' < pq$ such that $\hbox{gcd}(x',pq)\neq 1 $, $\hbox{gcd}(x'-1,pq)\neq 1 $, then $x'=pq-x+1$.
\end{lemma}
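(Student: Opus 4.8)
The plan is to pin down all admissible $x$ explicitly by means of the Chinese Remainder Theorem. First I would observe that if $1<x<pq$ and $\gcd(x,pq)\neq 1$, then $x$ is a multiple of \emph{exactly} one of $p$ and $q$: it is a multiple of at least one since $\gcd(x,pq)>1$, and it cannot be a multiple of both, for otherwise $pq\mid x$, impossible in the range $1<x<pq$. The same reasoning applies to $x-1$. Since $\gcd(x,x-1)=1$, the prime dividing $x$ and the prime dividing $x-1$ must be different. Hence an admissible $x$ falls into exactly one of two cases: (i) $p\mid x$ and $q\mid (x-1)$, or (ii) $q\mid x$ and $p\mid (x-1)$.

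Next I would apply the Chinese Remainder Theorem in each case. In case (i), the system $x\equiv 0\pmod p$, $x\equiv 1\pmod q$ has a unique solution $x_1$ modulo $pq$; the congruence $x_1\equiv 0\pmod p$ rules out $x_1=1$, and $x_1\equiv 1\pmod q$ rules out $x_1\equiv 0\pmod{pq}$, so $1<x_1<pq$. Symmetrically, in case (ii) the system $x\equiv 0\pmod q$, $x\equiv 1\pmod p$ has a unique solution $x_2$ with $1<x_2<pq$. This establishes the existence statement (take $x=x_1$) and shows that $x_1$ and $x_2$ are the only candidates.

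Finally, for the ``moreover'' part I would verify directly that $x_2=pq-x_1+1$: reducing $pq-x_1+1$ modulo $p$ gives $1$ (since $p\mid x_1$) and modulo $q$ gives $0$ (since $x_1\equiv 1\pmod q$), which are exactly the congruences defining $x_2$; as both integers lie in the open interval $(1,pq)$, they coincide. Moreover $x_1\neq x_2$, because they are incongruent modulo $p$. Consequently, if $x'\neq x$ is another integer satisfying $\gcd(x',pq)\neq 1$ and $\gcd(x'-1,pq)\neq 1$ with $1<x'<pq$, then $\{x,x'\}=\{x_1,x_2\}$, so $x'=pq-x+1$. I do not expect a genuine obstacle; the only points needing a little care are the bookkeeping that excludes the endpoints $x\in\{1,pq\}$ and keeps both CRT solutions strictly inside $(1,pq)$, and the check that the involution $x\mapsto pq-x+1$ interchanges the two cases.
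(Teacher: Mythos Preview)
Your argument is correct and complete: the case split into ``$p\mid x$, $q\mid(x-1)$'' versus ``$q\mid x$, $p\mid(x-1)$'' is forced, CRT gives exactly one solution in each case, and the involution $x\mapsto pq-x+1$ visibly swaps the two. The endpoint checks are handled properly.

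The paper proceeds differently. Instead of invoking CRT, it uses the preceding lemma (a B\'ezout identity $1=\alpha p+\beta q$ with $\max\{|\alpha p|,|\beta q|\}\le (pq+1)/2$) to \emph{construct} $x=\alpha p$ explicitly, and then argues uniqueness by writing a hypothetical second solution $y$ in the same form and deriving a divisibility contradiction from $|y-x|$. Your route is cleaner and more self-contained for the lemma as stated: classification via CRT gives existence and uniqueness in one stroke, without appeal to the auxiliary B\'ezout bound. What the paper's approach buys is the concrete representation $x=\alpha p$ together with the size estimate $x\le (pq+1)/2$; this specific form is reused verbatim in the proof of Theorem~\ref{pqsem} when analysing the ``missing'' valence $r-1=(pq+1)/2-\alpha p$. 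With your proof one would simply note afterwards that the smaller of $x_1,x_2$ is at most $(pq+1)/2$ and is the $\alpha p$ of the B\'ezout relation, so nothing is lost.
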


\begin{proof}
By Lemma \ref{alphap}, there exist two integers $\alpha$ and $\beta$ such that $\hbox{max}\{|\alpha p| , |\beta q|\} \leq (pq+1)/2 $ and $\alpha p + \beta q = 1$. Assume, without loss of restriction that, $\alpha p > 0$. Let $x=\alpha p$. Then we have that $x-1= - \beta q$. Thus, $\hbox{\hbox{gcd}}(x,pq)=p$ and $\hbox{\hbox{gcd}}(x-1,pq)=q$. Let $x'=pq-x+1$. Then we have that $\hbox{\hbox{gcd}}(x',pq)=q$, $\hbox{\hbox{gcd}}(x'-1,pq)=p$. Now, we show that $1 < x,x' < pq$. Since $\alpha $ and $p$ are positive integers, $x= \alpha p > 1$. Using Lemma \ref{alphap}, we have $x=\alpha p \leq (pq+1)/2 < pq.$ Thus, we obtain that $1<x'=pq-\alpha p+1<pq$. Hence, $1 < x,x' < pq$.

Finally, we prove that $x$ and $x'$ are unique. Suppose that there exists another $y$ such that $1 < y < pq$ with $\hbox{\hbox{gcd}}(y,pq)\neq 1$ and $\hbox{\hbox{gcd}}(y-1,pq)\neq 1$. By considering $y'=pq-y+1$, we can assume that $\hbox{\hbox{gcd}}(y,pq)=p$ and $\hbox{\hbox{gcd}}(y-1,pq) =q$. Let $\alpha '$ and $\beta '$ be such that $y= \alpha ' p$ and $ y-1= \beta ' q$. Then, $1 \leq \alpha ' < q$ and $ 1 \leq \beta ' < p$. Hence, $|y-x|=| \alpha ' - \alpha |p < pq$ and $|y-x|=|y-1-(x-1)|=| \beta ' + \beta |q$. However, $| \beta ' + \beta |q =| \alpha ' - \alpha | p$, a contradiction since $p$ and $q$ are different primes. Therefore, $x=y$.
\end{proof}
\begin{corollary}\label{coro: els valors de r conflictius}
Let $p$ and $q$ be different odd primes. Then, there exist exactly $2p^{k-1}$ integers $y$ with $1 < y < p^kq$ such that $\hbox{gcd}(y,p^kq)\neq 1 $, $\hbox{gcd}(y-1,p^kq)\neq 1 $. Moreover, these integers are of the form $x+\lambda pq$, $x'+\lambda pq\in [1,p^kq]$, where  $\lambda$ is a integer in $ [0,p^{k-1}-1]$ and $x,x'$ are the numbers described in Lemma \ref{xxprime}.
\end{corollary}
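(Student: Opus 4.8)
The plan is to reduce the statement for $m=p^kq$ to the case $m=pq$ already settled in Lemma \ref{xxprime}, by working modulo $pq$. First I would note that, since the only primes dividing $p^kq$ are $p$ and $q$, the condition $\gcd(y,p^kq)\neq 1$ is equivalent to ``$p\mid y$ or $q\mid y$'', and likewise $\gcd(y-1,p^kq)\neq 1$ is equivalent to ``$p\mid y-1$ or $q\mid y-1$''. As $y$ and $y-1$ are coprime, $p$ cannot divide both and $q$ cannot divide both, so every admissible $y$ falls into exactly one of two cases: (i) $p\mid y$ and $q\mid y-1$, or (ii) $q\mid y$ and $p\mid y-1$. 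In each case the relevant divisibilities depend only on the residue $r$ of $y$ modulo $pq$.

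Next I would show that this residue is forced to be one of the two numbers $x,x'$ of Lemma \ref{xxprime}. Write $y=r+\mu pq$ with $0\le r<pq$. The boundary residues are impossible: if $r=0$ then $\gcd(y-1,pq)=1$, and if $r=1$ then $\gcd(y,pq)=1$, either of which forces the gcd with $p^kq$ to be $1$ as well. Hence $1<r<pq$, and since $\gcd(r,pq)=\gcd(y,pq)\neq 1$ and $\gcd(r-1,pq)=\gcd(y-1,pq)\neq 1$, Lemma \ref{xxprime} applies and yields $r\in\{x,x'\}$ with $x'=pq-x+1$. Conversely, the residues $x$ and $x'$ do satisfy the required divisibility conditions — this is exactly what Lemma \ref{xxprime} records through $\gcd(x,pq)=p$, $\gcd(x-1,pq)=q$ and the symmetric statement for $x'$ — so every integer of the form $x+\lambda pq$ or $x'+\lambda pq$ lying in the range is admissible.

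Finally I would count. From $1<x<pq$ one gets that the integers $x+\lambda pq$ with $1<x+\lambda pq<p^kq$ are precisely those with $\lambda\in\{0,1,\dots,p^{k-1}-1\}$, and similarly for $x'+\lambda pq$; moreover $x\neq x'$ (if $x=x'$ then $2x=pq+1$, forcing $p\mid 1$), so the two residue classes modulo $pq$ are disjoint and contribute $p^{k-1}$ distinct values each. This gives exactly $2p^{k-1}$ admissible integers, of precisely the stated form, and in particular recovers Lemma \ref{xxprime} when $k=1$. I do not expect a genuine obstacle here; the only delicate points are the endpoint bookkeeping (discarding $r\in\{0,1\}$ and pinning down the exact range of $\lambda$) and the observation $x\neq x'$, which guarantees nothing is double-counted.
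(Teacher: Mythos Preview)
Your proposal is correct and follows essentially the same approach as the paper: both reduce modulo $pq$, observe that the gcd conditions with $p^kq$ are equivalent to the gcd conditions with $pq$ (since $p$ and $q$ are the only prime divisors), and then invoke Lemma~\ref{xxprime} on the residue. Your write-up is in fact slightly more careful than the paper's --- you explicitly rule out the boundary residues $r\in\{0,1\}$ and verify $x\neq x'$ to avoid double counting --- but the core idea is identical.
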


\begin{proof}
Let $x$ and $x'$ be the integers described in Lemma \ref{xxprime}. Then, for every integer $\lambda\in [0,p^{k-1}-1]$, we get $x+\lambda pq$, $x'+\lambda pq\in [1,p^kq]$ and $\hbox{gcd}(x,p^kq)\neq 1 $, $\hbox{gcd}(x-1,p^kq)\neq 1 $. Similarly, for every $y\in [1,p^kq]$ with $\hbox{gcd}(y,p^kq)\neq 1 $, $\hbox{gcd}(y-1,p^kq)\neq 1 $, there exists a positive integer $\lambda\in [0,p^{k-1}-1]$ such that $\lambda pq<y<(\lambda+1)pq$. Thus, $y-\lambda pq\in [1,pq]$, with $\hbox{gcd}(y,pq)\neq 1 $, $\hbox{gcd}(y-1,pq)\neq 1 $, since $p$ and $q$ are different primes. Hence, $y-\lambda pq$ is one of the two possible integers described in Lemma \ref{xxprime}.
\end{proof}

\section{A family of perfect edge-magic graphs of the form $C_{m} \odot \overline K_{n}$} \label{section: main}
Let $L$ be the set of vertices of degree $1$ of $G=C_{m} \odot \overline K_{n}$ and $C=V(G)\setminus L$. Assume that $C= \{v_{0},v_{1},\ldots,v_{m-1}\}$, $L= \{ v_{i}^{j}\}_{i=0,1,2,\ldots,m-1}^{ j = 1,2,\ldots,n } $ and $E(G)= \{ v_{i}v_{i+_{m}1},v_{i}v_{i}^{j}\}_{i=0,1,2,\ldots,m-1}^{ j = 1,2,\ldots,n}$ where $+_{m}$ denotes the sum modulo $m$. Let $ \overrightarrow G$ be an orientation of $G$ such that, the subdigraph induced by $C$ is strongly connected and all vertices of degree $1$ have indegree $1$. Note that, $\overrightarrow {G} \cong \overrightarrow K_{1,n}^{l} \otimes  C^+_{m}$, where $\overrightarrow K_{1,n}^{l}$ is the digraph obtained by orienting $K_{1,n}^l$ in such a way that all vertices of degree $1$ have indegree $1$ and $  C^+_{m}$ is a strong orientation of $C_{m}$. 

The following construction and lemmas are inspirated by the construction introduced by L\'opez et al. in \cite{LopMunRiu5}. Let $\mathcal{ M}_m$ be the set of all matrices of order $m \times m$ and let $g_{1}$ be the labeling of $\overrightarrow G$ induced by the product $ \overrightarrow K_{1,n}^{l} \otimes  C^+_{m}$, when considering the super edge-magic labeling of  $ \overrightarrow K_{1,n}^{l} $ that assigns label $1$ to the central vertex and a super edge-magic labeling $g$ of $C_{m}$. By identifying each vertex of $\overrightarrow G$ with the label assigned to it by $g_{1}$, we can construct the adjacency matrix of the digraph $\overrightarrow G$, which is of the form: $A_g^{1}=(A_{ij}^{1})$, where each $A_{ij}^{1} \in \mathcal{M}_m, A_{ij}^{1}=0$ for $i>1$ and $A_{1j}^{1}$ has the structure of the adjacency matrix of $g(C_{m})^+$, in which each vertex of $C_m^+$ is identified with the label assigned to it by $g$.
We can also consider the opposite strong orientation of the labeled cycle denoted by $ g(C_{m})^-$. If we identify each vertex of $\overrightarrow G\cong\overrightarrow K_{1,n}^{l} \otimes  C^-_{m}$ with the labels induced by the product, we obtain an adjacency matrix of $\overrightarrow G$ with the same structure as $A_g^{1}$. Let us denote this matrix by $B_g^{1}$. Then $B_g^{1}=(B_{ij}^{1})$, where each $B_{ij}^{1} \in \mathcal{M}_m, B_{ij}^{1}=0$ for $i>1$ and $B_{1j}^{1}$ has the structure of the adjacency matrix of $g(C_{m})^-$, in which each vertex of $C_m^-$ is identified with the label assigned to it by $g$.

Let $A_g^{r}$ and $B_g^{r}$ be the matrices obtained from $A_g^{1}$ and $B_g^{1}$ respectively by translating each row $r-1$ units, for $1\le r \leq mn+1$. Thus, if $A_g^r = (a_{ij}^{r})$, then
\begin{equation}\label{formula: translated matrix}
   a_{ij}^r=\left\{\begin{array}{ll}
a_{(i-r+1)j}^1, & i \ge r \\
 0, & \hbox{otherwise}. \end{array}\right.
\end{equation}

 Let $G(A_g^{r})$ and $ G(B_g^{r})$ be the digraphs with adjacency matrices $A_g^{r}$ and $B_g^{r}$ respectively. We also denote by $S(A_g^{r})$ and $S(B_g^{r})$ the subdigraphs of $G(A_g^{r})$ and $ G(B_g^{r})$ induced by the set of vertices $\{r,\ldots,r-1+m\}$, respectively. From the adjacency matrices $A_g^{r}$ and $ B_g^r$, it is easy to check the following lemma.

 \begin{lemma}\label{lemma: A^r and B^r, val(f^r)}
Let $g$ be a super edge-magic labeling of $C_m$. The vertices of $G(A_g^{r})$ and $G(B_g^{r})$ define a super edge-magic labeling $g^+_r$ and $g^-_r$, respectively, with valence  $\hbox{val}(g^+_r)=\hbox{val}(g^-_r)= \hbox{val}(g_1)+r-1$, $1\le r\le mn+1$.

 The digraphs $S(A_g^{r})$ and $S(B_g^{r})$ are 1-regular and the graphs $\hbox{und}(G(A_g^{r}))$ and $\hbox{und}(G(B_g^{r}))$ are of the form $H^r_g\odot \overline K_{n}$ where $H^r_g$ is a 2-regular graph. Moreover, $H^r_g\cong H^{r+\lambda m}_g$, for every positive integer $\lambda$ with $r+\lambda m\le (m+1)n$.
 \end{lemma}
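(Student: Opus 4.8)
The plan is to argue by induction on $r$, with base case $r=1$ (where $G(A_g^{1})$ and $G(B_g^{1})$ are exactly the $\otimes_h$-products recalled in Section~2), and then to exploit the fact, read off directly from \eqref{formula: translated matrix}, that $A_g^{r+1}$ is obtained from $A_g^{r}$ by pushing every entry one row down; equivalently, $(s,t)$ is an arc of $G(A_g^{r+1})$ if and only if $(s-1,t)$ is an arc of $G(A_g^{r})$ (and likewise for $B$). Throughout I identify each vertex with the label assigned to it by $g_1$, so that $V=\{1,\dots,m(n+1)\}$ and $p=q=m(n+1)$.

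For the base case, $G(A_g^{1})\cong\overrightarrow{K}_{1,n}^{l}\otimes C_m^{+}$ and $G(B_g^{1})\cong\overrightarrow{K}_{1,n}^{l}\otimes C_m^{-}$. Since a strong orientation of $C_m$ is a $1$-regular digraph, $C_m^{\pm}$ labelled by $g$ lies in $\mathcal{S}_m^{(m+3)/2}$, so Theorem~\ref{spk} makes both digraphs super edge-magic, with induced vertex labels $(a,i)\mapsto m(a-1)+i$ as in Remark~\ref{remarkspk}, and Lemma~\ref{valenceinducedproduct} gives $\hbox{val}(g_1^{+})=\hbox{val}(g_1^{-})=\hbox{val}(g_1)$ because the two products differ only in their cycle factor, which in both cases lies in $\mathcal{S}_m^{(m+3)/2}$ with the same parameter $k$. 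From the product structure I would extract the \emph{key fact}: the out-neighbours of a vertex $i$ with $1\le i\le m$ in $G(A_g^{1})$ are exactly $\sigma(i),\,\sigma(i)+m,\,\dots,\,\sigma(i)+nm$ --- one in each block $\{\lambda m+1,\dots,(\lambda+1)m\}$, all congruent to $\sigma(i)$ modulo $m$ --- where $\sigma$ is the successor permutation of $\{1,\dots,m\}$ induced by $g$ on $C_m^{+}$ (for $B$, use $\sigma^{-1}$); every other vertex has out-degree $0$, and, $\sigma$ being a bijection, every vertex has in-degree $1$ with its in-neighbour in $\{1,\dots,m\}$. For $r=1$ one also has $\hbox{und}(G(A_g^{1}))=C_m\odot\overline{K}_n$, $S(A_g^{1})=C_m^{+}$ is $1$-regular and $H_g^{1}=C_m$ is $2$-regular.

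For the inductive step, the row-shift gives a bijection between the arcs of $G(A_g^{r})$ and those of $G(A_g^{r+1})$ under which every end-sum $s+t$ is raised by exactly $1$; since for $r=1$ these end-sums form a set of $q=m(n+1)$ consecutive integers --- the super edge-magic condition of Lemma~\ref{super_consecutive} --- they do so for every $r$ with $1\le r\le mn+1$, the minimum of the set increases by $1$ at each step, and so does the valence ($=p+q+\min S$), yielding $\hbox{val}(g_r^{+})=\hbox{val}(g_1)+r-1$; the same runs for $B$. For the structural claims I would fix $r$, split $V$ into $C_r=\{r,\dots,r+m-1\}$ and $L_r=V\setminus C_r$ (of size $mn$), and combine the key fact with \eqref{formula: translated matrix} to see that the out-neighbours of $s\in C_r$ in $G(A_g^{r})$ are $\sigma(s-r+1)+\lambda m$, $\lambda=0,\dots,n$, while the vertices of $L_r$ have out-degree $0$. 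Since any $m$ consecutive integers meet each residue class modulo $m$ exactly once, precisely one of these $n+1$ out-neighbours lies in $C_r$ and the remaining $n$ are distinct vertices of $L_r$; dually, every vertex still has in-degree $1$ with in-neighbour in $C_r$. Hence $S(A_g^{r})$ is $1$-regular, $L_r$ is independent, and each vertex of $C_r$ is joined to exactly $n$ vertices of $L_r$, these $n$-sets partitioning $L_r$ --- which is precisely the description of $H_g^{r}\odot\overline{K}_n$, with $H_g^{r}:=\hbox{und}(S(A_g^{r}))$ a $2$-regular graph; the same argument applies to $B_g^{r}$.

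Finally, for $s_1,s_2\in C_r$ the description above gives $s_1\to s_2$ in $S(A_g^{r})$ if and only if $\sigma(s_1-r+1)\equiv s_2\pmod m$; the map $s\mapsto s+\lambda m$ carries $C_r$ bijectively onto $C_{r+\lambda m}$ and leaves this relation invariant (both $s_1-r+1$ and $s_2$ change by multiples of $m$), hence it is a digraph isomorphism $S(A_g^{r})\to S(A_g^{r+\lambda m})$ whenever both are defined, and on underlying graphs this yields $H_g^{r}\cong H_g^{r+\lambda m}$; likewise for $B$. The step I expect to require genuine care is the key fact --- that the $n+1$ out-arcs of a centre of $G(A_g^{1})$ land, at a common cycle position, one in each of the $n+1$ blocks of length $m$ --- which is exactly what the Kronecker product $\overrightarrow{K}_{1,n}^{l}\otimes C_m^{\pm}$ encodes; once it is in hand, the $1$-regularity of the induced subdigraphs, the corona splitting, the unit growth of the valence, and the $m$-periodicity of $H_g^{r}$ are all routine bookkeeping with \eqref{formula: translated matrix}.
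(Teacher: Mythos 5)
Your proposal is correct and follows essentially the same route as the paper's (much terser) proof: the valence claim comes from the fact that translating the rows shifts every induced edge-sum, hence the minimum sum and the valence, by one unit per step, while the $1$-regularity of $S(A_g^r)$, the corona decomposition $H^r_g\odot\overline{K}_n$, and the $m$-periodicity are read off from the block structure of the adjacency matrices. Your explicit successor-permutation description and residue-class counting simply fill in the details the paper leaves implicit, so no further changes are needed.
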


\proof The first part of the lemma comes from Lemma \ref{CnSEM}, since the minimum induced sum of two adjacent vertices increases by one unit at every step of the translation, and for $r=1$ this minimum sum is the minimum sum of adjacent vertices of a super edge-magic labeled cycle. The second part is due to the structure of the adjacency matrices.\qed

\begin{lemma}\label{lemma: S(A_f^r) is a cycle}
Let $f$ be the canonical super edge-magic labeling of $C_m$. If neither $\hbox{und}(S(A_f^r))$ nor $\hbox{und}(S(B_f^r))$ is isomorphic to a cycle, then $\hbox{\hbox{gcd}}((m+1)/2-(r-1),m)\neq 1$ and $\hbox{\hbox{gcd}}(m-1)/2-(r-1),m) \neq 1$.
\end{lemma}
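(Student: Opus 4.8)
The plan is to describe $S(A_f^r)$ and $S(B_f^r)$ explicitly as Cayley digraphs of $\mathbb{Z}_m$ with a single generator, and then read the connectivity of their underlying graphs off a single gcd.

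First, I would recall from Remark~\ref{strongorientationofcm} that in $f(C_m)^+$ an arc runs from $a$ to $b$ exactly when $b-a\equiv (m+1)/2\pmod m$, and in $f(C_m)^-$ exactly when $b-a\equiv (m-1)/2\pmod m$. Combining this with the product structure $\overrightarrow G\cong \overrightarrow K_{1,n}^l\otimes C_m^+$ and the vertex renaming of Remark~\ref{remarkspk}, one sees that under $g_1$ the cycle vertex carrying label $s\in\{1,\dots,m\}$ has its $n+1$ out-neighbours labelled $\sigma(s),\,m+\sigma(s),\,\dots,\,nm+\sigma(s)$, where $\sigma(s)\in\{1,\dots,m\}$ denotes the residue of $s+(m+1)/2$ modulo $m$; in particular every out-neighbour of $s$ carries a label congruent to $\sigma(s)$ modulo $m$.

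Next I would feed this into \eqref{formula: translated matrix}: by construction the vertex with label $i\in\{r,\dots,r+m-1\}$ of $G(A_f^r)$ has the same out-neighbours as the vertex with label $i-r+1$ of $G(A_f^1)$, so its out-neighbours are the labels $\sigma(i-r+1)+\ell m$ for $\ell=0,\dots,n$. Since $1\le r$ and $r+m-1\le m(n+1)$, exactly one of these $n+1$ labels lies in the window $\{r,\dots,r+m-1\}$ (the window contains $m$ integers and the labels form an arithmetic progression of common difference $m$); this re-proves that $S(A_f^r)$ is $1$-regular, and, writing $i=r-1+s$ with $s\in\{1,\dots,m\}$, it shows that the out-neighbour of $i$ in $S(A_f^r)$ equals $r-1+s'$ where $s'\equiv s+(m+1)/2-(r-1)\pmod m$. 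Identifying the vertices of $S(A_f^r)$ with $\mathbb{Z}_m$ via $i\mapsto s$, we conclude that $S(A_f^r)$ is the Cayley digraph of $\mathbb{Z}_m$ with connection set $\{(m+1)/2-(r-1)\}$; running the same argument with $f(C_m)^-$ in place of $f(C_m)^+$ shows that $S(B_f^r)$ is the Cayley digraph of $\mathbb{Z}_m$ with connection set $\{(m-1)/2-(r-1)\}$.

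Finally, a Cayley digraph of $\mathbb{Z}_m$ with a single generator $c$ decomposes into $\gcd(c,m)$ directed cycles, so its underlying graph is isomorphic to a single cycle precisely when $\gcd(c,m)=1$. Hence $\hbox{und}(S(A_f^r))$ is not a cycle exactly when $\gcd((m+1)/2-(r-1),m)\neq 1$, and $\hbox{und}(S(B_f^r))$ is not a cycle exactly when $\gcd((m-1)/2-(r-1),m)\neq 1$; assuming both fail to be cycles gives the two inequalities in the statement. The main obstacle is the middle step: carefully tracking, through the row translation and the block structure of the product, which label each out-arc lands on, and verifying that the $m$-element window $\{r,\dots,r+m-1\}$ selects exactly one representative of the relevant residue class — once this is in place the conclusion is immediate.
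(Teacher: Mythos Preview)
Your proposal is correct and follows essentially the same route as the paper: both arguments show that $(a,b)\in E(S(A_f^r))$ if and only if $b-a\equiv (m+1)/2-(r-1)\pmod m$ (and analogously for $B_f^r$), and then conclude via the coprimality criterion for a single-generator circulant to be connected. The paper obtains this congruence in one line directly from \eqref{formula: translated matrix} and Remark~\ref{strongorientationofcm}, without passing through the product description or the explicit Cayley-digraph language you use; your version is more detailed but not genuinely different.
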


\proof By (\ref{formula: translated matrix}), it is clear that $(a,b) \in E(S(A_f^r))$ if and only if $(a- (r-1),b) \in E(G(A_f^1))$. That is, if and only if $(b-a) \equiv (m+1)/2 - (r-1) \ (\hbox{mod} \ m)$, by (\ref{conditionforcycle1}) in Remark \ref{conditionforcycle1}. Similarly, $(a,b) \in E(G(B_f^r))$, if and only if  $(b-a) \equiv (m-1)/2 - (r-1) \ (\hbox{mod} \ m)$, by (\ref{conditionforcycle2}). Hence, the result follows.\qed

\begin{theorem}\label{pqsem}
Let $m=pq$ where $p$ and $q$ are different odd primes. Let $n$ be a positive integer. Then, the graph $G = C_{m} \odot \overline K_{n}$ is perfect super edge-magic.
\end{theorem}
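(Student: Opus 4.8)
The plan is to prove $\sigma_G=I_G$ by producing, for every integer in the super edge-magic interval, a super edge-magic labeling of $G$ attaining it. Write $N=m(n+1)$, so that $G$ has order and size $N$. First I would compute $I_G$: evaluating $S_G$ with the $m$ vertices of $C$ placed on the $m$ smallest, respectively largest, labels gives $\min S_G=2N+(m+3)/2$ and $\max S_G=\min S_G+mn$, and since $m$ is odd this minimum is an integer, so $I_G$ is exactly the set of the $mn+1$ integers from $2N+(m+3)/2$ to $2N+(m+3)/2+mn$. Moreover, applying Lemma~\ref{valenceinducedproduct} to $\overrightarrow K_{1,n}^{l}$ with central label $1$ (of valence $2n+4$) and the canonical cycle in $\mathcal{S}_m^{(m+3)/2}$, the labeling $g_1$ of $\overrightarrow G$ has $\hbox{val}(g_1)=2N+(m+3)/2=\lceil\min S_G\rceil$. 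Thus it suffices to produce a super edge-magic labeling of valence $\hbox{val}(g_1)+s$ for every $s$ with $0\le s\le mn$.

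Next I would exploit the translation construction with the canonical labeling $f$ of $C_m$. By Lemma~\ref{lemma: A^r and B^r, val(f^r)}, for $1\le r\le mn+1$ the digraphs $G(A_f^{r})$ and $G(B_f^{r})$ carry super edge-magic labelings of valence $\hbox{val}(g_1)+r-1$ on crowns over $2$-regular graphs, and by Lemma~\ref{lemma: S(A_f^r) is a cycle} at least one of these crowns equals $C_m\odot\overline K_n$ unless simultaneously $\gcd((m+1)/2-(r-1),m)\neq1$ and $\gcd((m-1)/2-(r-1),m)\neq1$. Writing $u$ for the residue of $(m+1)/2-(r-1)$ modulo $m$, the exceptional case reads $\gcd(u,m)\neq1$ and $\gcd(u-1,m)\neq1$, so Lemma~\ref{xxprime} forces $u\in\{x,x'\}$; equivalently, $r-1$ lies in one of the two residue classes $\pm c\pmod m$, where $c=(m+1)/2-x$. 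Since $x$ is constructed in Lemma~\ref{xxprime} so that $x\equiv0\pmod p$ and $x\equiv1\pmod q$, we have $c\equiv(m+1)/2\pmod p$ and $c\equiv(m-1)/2\pmod q$. Intersecting these two classes with $\{0,1,\dots,mn\}$ leaves precisely $2n$ positions $s=r-1$ (symmetric under $s\mapsto mn-s$) for which this first family produces no labeling of $C_m\odot\overline K_n$; every other valence in $I_G$ is now realised.

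To reach the remaining $2n$ valences I would add two more families coming from the $\otimes_h$-product. Since $p$ and $q$ are odd primes, the smaller crowns $C_p\odot\overline K_n$ and $C_q\odot\overline K_n$ are perfect super edge-magic by Theorem~\ref{powerofprime}; take the minimum-valence labeling of $C_q\odot\overline K_n$ (in which the cycle vertices carry the labels $1,\dots,q$) and a function $h$ assigning, to the $q$ cycle arcs of the chosen orientation of $C_q\odot\overline K_n$, elements of $\mathcal{S}_p^{(p+3)/2}$ so chosen that the cycle part of $(C_q\odot\overline K_n)\otimes_h\mathcal{S}_p^{(p+3)/2}$ is a single $C_{pq}$ — this requires only that a suitable signed sum of differences be prime to $p$, which is available by Lemma~\ref{alphap}. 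Then the product is again $C_{pq}\odot\overline K_n$, super edge-magic by Theorem~\ref{spk}, and by Lemma~\ref{valenceinducedproduct} its valence is $p(2q(n+1)+(q+3)/2-3)+(p+3)/2+p=\hbox{val}(g_1)$. Crucially, the super edge-magic labeling it induces on $C_{pq}$ is \emph{not} the canonical circulant one (its consecutive differences are periodic with period $p$), so using it in place of $f$ in Lemma~\ref{lemma: A^r and B^r, val(f^r)} and translating gives super edge-magic labelings of $C_{pq}\odot\overline K_n$ of every valence $\hbox{val}(g_1)+r-1$ for which the translated cycle part, read off from (\ref{formula: translated matrix}), remains connected. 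A symmetric family arises from $(C_p\odot\overline K_n)\otimes_h\mathcal{S}_q^{(q+3)/2}$. The remaining point is a combinatorial/number-theoretic analysis — using (\ref{formula: translated matrix}), Corollary~\ref{coro: els valors de r conflictius} and the congruences $x\equiv0\pmod p$, $x\equiv1\pmod q$ from Lemma~\ref{xxprime} — showing that the positions where these two product families disconnect, reduced modulo $p$ and modulo $q$ respectively, can be kept away from the classes $\pm(m+1)/2\pmod p$ and $\pm(m-1)/2\pmod q$ into which the $2n$ uncovered positions fall; this guarantees that each of those $2n$ positions is good for at least one of the two new families. Combining the three families then realises every valence in $I_G$, so $\sigma_G=I_G$ and $G$ is perfect super edge-magic.

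The hard part will be exactly this last analysis. One must first confirm that the $\otimes_h$-product reassembles into the connected crown $C_{pq}\odot\overline K_n$ — the choice of $h$ on the cycle arcs, governed by a coprimality condition of the kind supplied by Lemma~\ref{alphap}, is what makes this work — and then, the real obstacle, determine precisely for which translates of the non-circulant product labelings the induced cycle $C_{pq}$ splits into shorter cycles, and verify via the explicit description of $x,x'$ in Lemma~\ref{xxprime} that these ``bad'' sets never meet the bad positions of the canonical family. This Chinese-remainder bookkeeping between the mod-$p$, mod-$q$ and mod-$pq$ conditions, rather than any single hard estimate, is the technical core; the identity $\hbox{val}(f^c)=4p+q+3-\hbox{val}(f)$ from (\ref{formula: valence of the super complentary}), which reflects the interval under $s\mapsto mn-s$, can be used to halve the case analysis.
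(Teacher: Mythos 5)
Your setup matches the paper's: the computation of $I_G$, the translated families $A_f^r$, $B_f^r$ built from the canonical labeling $f$ (Lemmas \ref{lemma: A^r and B^r, val(f^r)} and \ref{lemma: S(A_f^r) is a cycle}), the identification of the exceptional positions via Lemma \ref{xxprime}, and the use of the complementary labeling (\ref{formula: valence of the super complentary}) to halve the remaining work are all correct and are exactly how the paper begins. The genuine gap is the decisive step you explicitly defer as ``the technical core'': you never prove that any of your product-based labelings, after translation to an exceptional position, actually has $C_{pq}\odot\overline K_n$ as underlying graph rather than a crown over a disjoint union of shorter cycles. That connectivity statement \emph{is} the theorem -- everything preceding it is routine given the quoted lemmas. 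Worse, your plan is organized as ``show that the positions where the new product families disconnect avoid the exceptional positions of the canonical family,'' which you do not verify and which is not automatic: a translate of a product labeling can perfectly well disconnect exactly at those residues (this is what the canonical labeling itself does), so without a concrete congruence computation there is no reason the three families jointly cover $I_G$.

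For comparison, the paper avoids any bad-set bookkeeping over all translates. It takes the single labeling $g$ of $C_{pq}$ induced by $f(C_q)^+\otimes f(C_p)^-$ (the opposite orientations are essential), translates it only to the one exceptional position $r-1=(pq+1)/2-\alpha p$ supplied by Lemmas \ref{alphap} and \ref{xxprime}, and proves directly that $\hbox{und}(S(A_g^{r}))$ is a single cycle: splitting the arcs into the two adjacency types shows that a shorter cycle through $r$ would have length $kp$ with $k(p-1)/2$ arcs of type i, giving $kp\alpha+k(p-1)/2=sq$, hence $q\mid (p+1)/2$, which contradicts $\alpha p\le (pq+1)/2$. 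The second exceptional valence then follows from the super edge-magic complementary labeling. Your proposal would become a proof only if you supplied an argument of this kind for your families $(C_q\odot\overline K_n)\otimes_h\mathcal{S}_p^{(p+3)/2}$ and $(C_p\odot\overline K_n)\otimes_h\mathcal{S}_q^{(q+3)/2}$; as written, the connectivity of the translated cycle part at the exceptional positions -- the only hard point -- is asserted, not proved.
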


\begin{proof}
Let us first determine the super edge-magic interval $I_G$ of $G$. The maximum of $I_{G}$ occurs when
$\{g(u):u \in L\} = \{1,2,3,\ldots,mn\}$ and the minimum when $\{g(u):u \in L \} = \{m+1, m+2,\ldots,m+mn\}$ where $L$ denotes the set of vertices of degree $1$ of $G$ and $g:V(G) \rightarrow \{ i \}_{i=1}^{m+mn}$ is any bijective function. Thus, $I_G=[(3+5m)/2+2mn,(3+5m)/2+3mn]\cap \mathbb{Z}$.


Let $f$ be the canonical labeling of the cycle. By Lemmas \ref{lemma: A^r and B^r, val(f^r)} and \ref{lemma: S(A_f^r) is a cycle}, we obtain that for all $r$ with $1 \leq r \leq mn+1$, with either $\hbox{\hbox{gcd}}((m+1)/2-(r-1),m) = 1$ or $\hbox{\hbox{gcd}}((m-1)/2-(r-1),m) = 1$,  either $A_f^r$ or $B_f^r$ is the adjacency matrix of a super edge-magic labeled digraph, whose underlying graph is $G$. Moreover, if $f_r$ is the induced super edge-magic labeling of $G$, then $\hbox{val}(f_r)= \hbox{val}(f_1)+r-1$. Notice that, by Lemma \ref{valenceinducedproduct}, val$(f_1)=(5m+3)/2+2mn$.

Now, we provide a construction to cover the missing valences of $G$. That is, $\hbox{val}(f_1)+r-1$, with $\hbox{\hbox{gcd}}((m+1)/2-(r-1),m)\neq 1$ and $\hbox{\hbox{gcd}}(m-1)/2-(r-1),m) \neq 1$. What happens for this values is that, by Lemma \ref{lemma: S(A_f^r) is a cycle}, we can not guarantee that $H^r_f$ is a cycle. Also, by Lemma \ref{lemma: S(A_f^r) is a cycle}, we have that $H^r_f\cong H^{r+\lambda m}_f$, for every positive integer $\lambda$ with $r+\lambda m\le (m+1)n$. Thus, in what follows, we will assume that $n=1$.

Let $\alpha p+\beta q=1$ be the B\'ezout identity where $\alpha p>0$ and $\max \{\alpha p, |\beta q|\}\le (pq+1)/2$ (such $\alpha$ and $\beta$ exist by Lemma \ref{alphap}). Then, $x=\alpha p$ is one of the integers of Lemma \ref{xxprime}. The other one is $x'=pq-\alpha p+1$. Thus, one of the missing valences is  $\hbox{val}(f_1)+r-1$, where $r-1=(pq+1)/2-\alpha p$. That is,
\begin{eqnarray}
 \nonumber 
  r-1 &=&p(\frac{q+1}{2}-\alpha-1)+\frac{p+1}{2}.
\end{eqnarray}

Let $\alpha'= (q+1)/{2}-\alpha$ and $\beta'= (p+3)/2$. Then, $r-1=p(\alpha'-1)+\beta'-1$.

Notice that, if we prove the existence of a super edge-magic labeling $g_r$ of $G$ with valence $\hbox{val}(f_1)+r-1$, then the other missing valence, namely, $\hbox{val}(f_1)+(pq-1)/2+\alpha p$ will be realized by the super edge-magic complementary labeling of $g_r$, namely $g_r^c$ (see  (\ref{formula: valence of the super complentary})).

Let $g$ be the labeling of $C_m^+$ induced by the product $ f(C_{q})^+ \otimes  f(C_{p})^-$, when considering the canonical super edge-magic labeling of  $C_{q}$ and $C_{p}$, respectively. We will prove that $H^r_g\cong \hbox{und}(S(A_g^{r}))$ is a cycle of length $pq$.

Let $(a', b') \in E(S(A_g^{r}))$, that is $r \leq a', b' \leq r-1+m$. Thus, $(a' -(r-1), b') \in E(G(A_g^{1}))$. In particular, there exists a nonnegative integer $\lambda_0= \lambda_0( b' )$ such that $(a' -(r-1), b'- \lambda_0m) \in E(S(A_g^{1}))$. Let $(a,i),(b,j)$ be such that $a' = p(a-1)+i$, $b' - \lambda_0m=p(b-1)+j$ where $1 \leq a \leq 2q$, $1 \leq b \leq q$ and $1 \leq i,j \leq p$. This implies, $(p(a-\alpha')+(i- \beta'+1), p(b-1)+j) \in E(S(A_g^{1}))$. That is, $(p(a-\alpha')+(i- \beta'+1), p(b-1)+j) \in E(  f(C_{q})^+ \otimes  f(C_{p})^-)$.

We have two types of adjacencies:

\textbf{Type i:} $1 \leq i-\beta'+1 \leq p$.
By definition of $\otimes$-product and the labeling induced (see Remark \ref{remarkspk}), we obtain that $(a-\alpha'+1,b) \in E(f(C_{q})^+)$ and $(i-\beta'+1,j) \in E( f(C_{p})^-)$. That is, using (\ref{conditionforcycle1}) and (\ref{conditionforcycle2}) in Remark \ref{strongorientationofcm},
 $b-(a-\alpha'+1)\equiv(q+1)/2 \ (\hbox{mod} \ q)$ and $ j-(i- \beta'+1) \equiv (p-1)/2 \ (\hbox{mod} \ p).$ Equivalently,
 $b-a\equiv \alpha +1\ (\hbox{mod} \ q)$ and $j-i\equiv -1 \ (\hbox{mod} \ p)$.

\textbf{Type ii:} $-p+2 \leq i- \beta'+1 \leq 0$.
Again by definition of $\otimes$-product and the labeling induced, we obtain  $(a-\alpha,b) \in E(f(C_{q})^+)$ and $(p+i-\beta'+1,j) \in E( f(C_{p})^-)$. Thus, using (\ref{conditionforcycle1}) and (\ref{conditionforcycle2}), $b-(a-\alpha') \equiv (q+1)/2 \ (\hbox{mod} \ q)$ and $j-(p+i- \beta'+1)\equiv (p-1)/2 \ (\hbox{mod} \ p)$.  Equivalently,
 $b-a\equiv \alpha \ (\hbox{mod} \ q)$ and $j-i\equiv -1 \ (\hbox{mod} \ p)$.

Assume that $r$ is contained in a cycle ${C}^+_l$, $l<pq$, with $I$ edges of type $i$. Then,
$l=kp$, $I=k(p-1)/2$, for some positive integer $k$, and
\begin{eqnarray} \label{eq_adjacency of C_l}
  kp\alpha +k(p-1)/2 &=& sq,
\end{eqnarray}

for some integer $s$. Using that $\alpha p=1-\beta q$ and (\ref{eq_adjacency of C_l}), we obtain that $q$ is a divisor of $(p+1)/2$. Note that, this implies that $p+1=\lambda q$, for some positive $\lambda$, and hence, $p(\alpha +1)=(\lambda -\beta)q$. Therefore, $q$ divides $\alpha+1$ contradicting that $\alpha p<(pq+1)/2$.

\end{proof}

The magic interval of crowns of the form $C_m\odot\overline{K}_n$ was obtained in \cite{PEM_LMR}.

\begin{lemma} \label{magicinterval}
\cite{PEM_LMR} Let $m$ and $n$ be positive integers with $m \geq 3$. Then, the magic interval of $C_m\odot\overline{K}_n$ is given by
$$J_{C_m\odot\overline{K}_n}=\left[\frac{3+5m}2+2mn,\frac{3+7m}{2}+4mn\right]\cap \mathbb{Z}.$$
\end{lemma}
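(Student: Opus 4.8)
The plan is to compute $\min T_G$ and $\max T_G$ directly from the definition of $T_G$, since by definition $J_G=[\lceil\min T_G\rceil,\lfloor\max T_G\rfloor]\cap\mathbb{Z}$ and no actual edge-magic labeling need exist to determine this interval. First I would record the parameters of $G=C_m\odot\overline{K}_n$: it has order $m+mn$ and size $m+mn$, so writing $N=m(n+1)$ it is an $(N,N)$-graph and every bijection $g:V(G)\cup E(G)\to\{i\}_{i=1}^{2N}$ has range $\{1,\ldots,2N\}$. The only vertices of degree greater than $1$ are the $m$ cycle vertices, each of degree $n+2$; the $mn$ pendant vertices have degree $1$. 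Writing $W(g)=\sum_{u\in V}\deg(u)g(u)+\sum_{e\in E}g(e)$ for the numerator appearing in $T_G$, the crucial observation is that every element of $V(G)\cup E(G)$ other than a cycle vertex contributes its label with coefficient $1$, so that $W(g)=\Sigma+(n+1)S$, where $\Sigma=\sum_{i=1}^{2N}i=N(2N+1)$ is a constant and $S$ denotes the sum of the $m$ labels that $g$ assigns to the cycle vertices.

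Since $\Sigma$ is constant and $n+1>0$, extremizing $W(g)$ reduces to extremizing $S$ over all choices of $m$ distinct labels from $\{1,\ldots,2N\}$ for the cycle vertices; the remaining labels may be distributed among the other elements arbitrarily, as they all carry coefficient $1$. The sum $S$ is minimized by the run $\{1,\ldots,m\}$, giving $S_{\min}=m(m+1)/2$, and maximized by the top run $\{2N-m+1,\ldots,2N\}$, giving $S_{\max}=m(4N-m+1)/2$. Using the identity $(n+1)m=N$ to simplify $(n+1)S_{\min}=N(m+1)/2$ and $(n+1)S_{\max}=N(4N-m+1)/2$, I obtain $W_{\min}=N(4N+m+3)/2$ and $W_{\max}=N(8N-m+3)/2$. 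Dividing by the size $N$ yields $\min T_G=(4N+m+3)/2$ and $\max T_G=(8N-m+3)/2$.

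Substituting $N=m+mn$ and simplifying then gives $\min T_G=(3+5m)/2+2mn$ and $\max T_G=(3+7m)/2+4mn$, which are exactly the endpoints in the statement; moreover $\max T_G-\min T_G=2N-m=m+2mn>0$, so $\lceil\min T_G\rceil\le\lfloor\max T_G\rfloor$ and $J_G$ is well defined, equal by its definition to $[(3+5m)/2+2mn,(3+7m)/2+4mn]\cap\mathbb{Z}$. There is no genuine obstacle here: the only point requiring care is the degree bookkeeping, namely that the cycle vertices are the unique elements of degree exceeding $1$ and carry coefficient $n+2$ while all pendant vertices and all edges carry coefficient $1$. Once this is settled, the problem collapses to the elementary fact that a weighted label-sum with a single distinguished weight is extremized by placing the extreme runs of labels on the distinguished positions, so the remainder is routine algebra.
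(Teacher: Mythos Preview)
Your computation is correct. The paper itself does not supply a proof of this lemma; it is quoted verbatim from \cite{PEM_LMR}, so there is no in-paper argument to compare against. Your approach---writing $W(g)=\Sigma+(n+1)S$ where $\Sigma=N(2N+1)$ is the total of all labels and $S$ is the sum of the labels on the $m$ cycle vertices, then extremizing $S$ over $m$-subsets of $\{1,\dots,2N\}$---is exactly the natural direct calculation one would expect, and the algebra checks: with $N=m(n+1)$ one gets $\min T_G=(4N+m+3)/2=(3+5m)/2+2mn$ and $\max T_G=(8N-m+3)/2=(3+7m)/2+4mn$.
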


Theorem \ref{pqsem} implies that for every element $k$ included in the super edge-magic interval, there exists a super edge-magic labeling with valence $k$. Taking the complementary labeling of these labelings, we get that all natural numbers from $3mn+(3+7m)/2$ up to $4mn+(3+7m)/2$ appear as valences of edge-magic labelings of $C_m \odot \overline K_{n} $. Therefore, in order to prove that $C_{m} \odot \overline K_{n}$ is perfect edge-magic, we only need to show that for each $k \in \mathbb{N}$, with $3mn+(3+5m)/2 < k < 3mn+(3+7m)/2$, there exists an edge-magic labeling with valence $k$. We do this using the odd and even labelings of the labelings $f_{r}$ and $g_r$ introduced in the proof of Theorem \ref{pqsem}.

\begin{lemma}
Let $m$ be the product of two different odd primes and let $n$ be any positive integer. Then, for each $k$ with $2mn+3m+1 \leq k \leq 4mn+3m+2$ there exists an edge-magic labeling of $C_{m} \odot \overline K_{n}$ with valence $k$.
\end{lemma}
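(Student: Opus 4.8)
The plan is to combine the super edge-magic labelings $f_r$ and $g_r$ from Theorem~\ref{pqsem} with the odd/even labeling machinery of Lemma~\ref{lemma: oddevenlabelings}, together with the complementary labeling, to sweep out the whole interval $[2mn+3m+1,\,4mn+3m+2]$. First I would observe that the graph $C_m\odot\overline{K}_n$ has equal order and size, namely $p=q=m+mn=m(n+1)$, so Lemma~\ref{lemma: oddevenlabelings} applies: from any super edge-magic labeling $h$ of $G$ we obtain an odd labeling $o(h)$ with valence $2\,\hbox{val}(h)-2m(n+1)-2$ and an even labeling $e(h)$ with valence $2\,\hbox{val}(h)-2m(n+1)-1$. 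Running $\hbox{val}(h)$ over the full super edge-magic interval $I_G=[(5m+3)/2+2mn,\,(5m+3)/2+3mn]$ guaranteed by Theorem~\ref{pqsem}, the odd labelings realize all \emph{even} values in $[3m+1+2mn,\,3m+1+4mn]$ and the even labelings realize all \emph{odd} values in $[3m+2+2mn,\,3m+2+4mn]$. Here is where the two shifted constructions $A$ and $B$ matter: consecutive valences in $I_G$ are attained, so $2\,\hbox{val}(h)$ steps by $2$, and the odd and even families interleave to cover \emph{every} integer in the target range.

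More carefully, I would write $v_0=(5m+3)/2+2mn=\min I_G$. For each integer $t$ with $0\le t\le mn$ there is a super edge-magic labeling of valence $v_0+t$ (by Theorem~\ref{pqsem}); applying $o(\cdot)$ gives valence $2(v_0+t)-2m(n+1)-2$ and applying $e(\cdot)$ gives $2(v_0+t)-2m(n+1)-1$. Substituting $v_0$, these are $2mn+3m+1+2t$ and $2mn+3m+2+2t$ respectively. As $t$ ranges over $\{0,1,\dots,mn\}$, the first family yields $\{2mn+3m+1,\ 2mn+3m+3,\ \dots,\ 4mn+3m+1\}$ and the second yields $\{2mn+3m+2,\ 2mn+3m+4,\ \dots,\ 4mn+3m+2\}$; the union is exactly $[2mn+3m+1,\,4mn+3m+2]\cap\mathbb{Z}$. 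So the core of the argument is just this arithmetic check plus the assertion that Theorem~\ref{pqsem} indeed supplies a super edge-magic labeling for \emph{every} valence in $I_G$ — which is precisely the statement that $G$ is perfect super edge-magic.

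The only genuine subtlety — and the step I expect to need the most care — is confirming that the hypothesis $p=q$ of Lemma~\ref{lemma: oddevenlabelings} really holds for $C_m\odot\overline{K}_n$, i.e. that order equals size. The crown has $m$ cycle vertices and $mn$ degree-one vertices, total $m(n+1)$; it has $m$ cycle edges and $mn$ pendant edges, total $m(n+1)$ as well, so $p=q$ holds and $2p=2m(n+1)=2mn+2m$. One should double-check the constant $-2m(n+1)-2$ versus $-2m(n+1)-1$ lines up with the endpoints $3m+1$ and $3m+2$ as claimed; a convenient sanity check is that the midpoint value $t=0$ of the odd family, $2mn+3m+1$, matches the lower endpoint of the claimed range, and $t=mn$ of the even family, $4mn+3m+2$, matches the upper endpoint. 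I would also remark that one does not even need the distinction between the $f_r$ and the $g_r$ labelings for this lemma: all that is used is the \emph{output} of Theorem~\ref{pqsem}, namely that the super edge-magic set $\sigma_G$ equals the full interval $I_G$. Thus the proof reduces to: (1) cite Theorem~\ref{pqsem} to get super edge-magic labelings of all valences in $I_G$; (2) apply the odd and even labeling constructions of Lemma~\ref{lemma: oddevenlabelings}; (3) verify the resulting valences interleave to cover $[2mn+3m+1,\,4mn+3m+2]$.
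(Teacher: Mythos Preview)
Your proposal is correct and follows essentially the same approach as the paper: both apply the odd/even labeling construction of Lemma~\ref{lemma: oddevenlabelings} to the full family of super edge-magic labelings supplied by Theorem~\ref{pqsem}, then check arithmetically that the resulting valences interleave to fill $[2mn+3m+1,\,4mn+3m+2]$. Your write-up is in fact slightly cleaner in that you invoke only the \emph{statement} of Theorem~\ref{pqsem} (perfect super edge-magicness) rather than re-enumerating the particular labelings $f_r$, $g_r$, $g_r^{c}$ as the paper does, and you explicitly verify the $p=q$ hypothesis needed for Lemma~\ref{lemma: oddevenlabelings}.
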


\begin{proof}
Let $m=pq$, where $p$ and $q$ are different odd primes. Let $x$ and $x'$ be the integers introduced in Lemma \ref{xxprime}. Consider the super edge-magic labelings $f_r$ and $g_r$ of $C_{m} \odot \overline K_{n}$, introduced in the proof of Theorem \ref{pqsem}. Then, the set $\{ \hbox{val}(f_{r}) ; 1 \leq r \leq mn+1, r-1\notin \{(pq+1)-x,(pq+1)-x'\}\}\cup \{ \hbox{val}(g_{r}),\hbox{val}(g^c_{r}):\ r-1= (pq+1)-x\}$ is a set of consecutive integers. Thus, Lemma \ref{lemma: oddevenlabelings} implies that the set $ \{ \hbox{val}(o(f_{r})), \hbox{val}(e(f_{r})) ; 1 \leq r \leq mn+1, r-1\notin \{(pq+1)-x,(pq+1)-x'\}\}\cup \{ \hbox{val}(o(g_{r})),\hbox{val}(o(g^c_{r})), \hbox{val}(e(g_{r})),\hbox{val}(e(g^c_{r})):\ r-1= (pq+1)-x\}$ contains all integers from $\hbox{val}(o(f_{1}))$ up to $\hbox{val}(e(f_{mn+1}))$. That is, all integers from $2mn+3m+1$ up to $4mn+3m+2$.
\end{proof}

Since $2mn+3m+1 \leq 3mn+(3+5m)/2$ and $3mn+(3+7m)/2 \leq 4mn+3m+2$ for $n \geq 1$, we obtain the next theorem.

\begin{theorem}
Let $m=pq$ where $p$ and $q$ are different odd primes. Let $n$ be a positive integer. Then, the graph $G = C_{m} \odot \overline K_{n}$ is perfect edge-magic.
\end{theorem}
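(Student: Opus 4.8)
The final theorem follows almost immediately by assembling the pieces already in place, so the plan is essentially bookkeeping. First I would invoke the previous lemma, which produces, for every integer $k$ with $2mn+3m+1\le k\le 4mn+3m+2$, an edge-magic labeling of $G=C_m\odot\overline K_n$ with valence $k$. The whole point is then to check that this interval of realized valences covers the magic interval $J_{C_m\odot\overline K_n}=\left[\frac{3+5m}{2}+2mn,\ \frac{3+7m}{2}+4mn\right]\cap\mathbb{Z}$ supplied by Lemma \ref{magicinterval}, since by definition $G$ is perfect edge-magic precisely when $\tau_G=J_G$, and the inclusion $\tau_G\subseteq J_G$ always holds.

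Thus the only thing to verify is the pair of numerical inequalities
\[
2mn+3m+1\ \le\ \frac{3+5m}{2}+2mn
\qquad\text{and}\qquad
\frac{3+7m}{2}+4mn\ \le\ 4mn+3m+2,
\]
i.e.\ that the endpoints of $J_G$ lie inside $[2mn+3m+1,\,4mn+3m+2]$. The left inequality reduces to $6m+2\le 3+5m$, i.e.\ $m\le 1$—wait, that is false, so I would instead note that the comparison goes the other way: the realized interval must \emph{contain} $J_G$, so I need $2mn+3m+1\le \frac{3+5m}{2}+2mn$ to fail in the safe direction; rewriting, $\frac{3+5m}{2}+2mn-(2mn+3m+1)=\frac{3+5m-6m-2}{2}=\frac{1-m}{2}\le 0$ for $m\ge 1$, which confirms $2mn+3m+1\ge \frac{3+5m}{2}+2mn$ is \emph{wrong}; the correct reading is that $\frac{3+5m}{2}+2mn\ge 2mn+3m+1$ would be needed only if we wanted the reverse containment. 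The text already asserts "$2mn+3m+1\le 3mn+(3+5m)/2$" and "$3mn+(3+7m)/2\le 4mn+3m+2$" for $n\ge1$, so I would simply carry out those two one-line checks: the first is equivalent to $mn\ge (m+1)/2$, which holds since $mn\ge m>\tfrac{m+1}{2}$ for $m\ge2$; the second is equivalent to $mn\ge (m-1)/2$, which holds for the same reason. Hence $J_G\subseteq[2mn+3m+1,\,4mn+3m+2]\subseteq\tau_G$.

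Combining, $\tau_G\supseteq J_G$ and $\tau_G\subseteq J_G$, so $\tau_G=J_G$ and $G$ is perfect edge-magic, as claimed. I do not anticipate any genuine obstacle here: all the real work—constructing the super edge-magic labelings $f_r$ via the $\otimes_h$-product, covering the exceptional valences through the labeling $g_r$ and its complement $g_r^c$, and passing to odd/even labelings via Lemma \ref{lemma: oddevenlabelings}—has already been done in Theorem \ref{pqsem} and the preceding lemma. The one mild subtlety is making sure the two interval-endpoint inequalities are stated in the correct direction (we want the \emph{realized} valence interval to be at least as large as the \emph{magic} interval $J_G$), and that the coverage from the previous lemma is a genuine interval of consecutive integers rather than a set with gaps; both are guaranteed by the construction, since the $\{\mathrm{val}(f_r)\}$ together with the two complementary valences form consecutive integers and the odd/even operation maps consecutive valences $v$ to the consecutive pair $2v-2m-2,\,2v-2m-1$.
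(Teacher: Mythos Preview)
Your assembly of the pieces is not quite right, and the error is not merely cosmetic. The interval $[2mn+3m+1,\,4mn+3m+2]$ produced by the preceding lemma does \emph{not} contain the magic interval $J_G=[(3+5m)/2+2mn,\,(3+7m)/2+4mn]$. For instance with $m=15$, $n=1$ one has $J_G=[69,114]$ while the lemma only yields $[76,107]$. Your own computation in the second paragraph already shows that $2mn+3m+1\le(3+5m)/2+2mn$ is equivalent to $m\le 1$, so the left endpoint of $J_G$ lies strictly below the lemma's range; the symmetric failure occurs at the right endpoint. The two inequalities you then verify, $2mn+3m+1\le 3mn+(3+5m)/2$ and $3mn+(3+7m)/2\le 4mn+3m+2$, are correct but they concern the numbers $3mn+(3+5m)/2$ and $3mn+(3+7m)/2$, which are not the endpoints of $J_G$; your conclusion ``Hence $J_G\subseteq[2mn+3m+1,4mn+3m+2]$'' simply does not follow from them.

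What the paper actually does is cover $J_G$ with \emph{three} overlapping intervals, and you are missing two of them. The super edge-magic labelings from Theorem~\ref{pqsem} are themselves edge-magic and realise every valence in $I_G=[(3+5m)/2+2mn,\,(3+5m)/2+3mn]$, the bottom portion of $J_G$. Applying the edge-magic complement $\overline f$ (valence $3(p+q+1)-\hbox{val}(f)$ with $p=q=m(n+1)$) to those same labelings realises every valence in $[(3+7m)/2+3mn,\,(3+7m)/2+4mn]$, the top portion of $J_G$. What remains is the middle gap between $(3+5m)/2+3mn$ and $(3+7m)/2+3mn$, and \emph{that} is precisely what the two inequalities you checked show the lemma covers. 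So the fix is to invoke the super edge-magic labelings and their (edge-magic) complements explicitly before appealing to the lemma for the residual middle range.
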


\section{Edge-magic labelings of crowns}

The fact that even cycles admit edge-magic labelings has been known for several decades already. See the next theorem.

\begin{theorem} \label{evencyclevalence}
\cite{Wa}Every even cycle $C_n$ has an edge-magic labeling with magic sum $(5n+4)/2$.
\end{theorem}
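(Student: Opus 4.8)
The plan is to exhibit an explicit edge-magic labeling of $C_n$ ($n$ even) of valence $(5n+4)/2$, which is an integer precisely because $n$ is even. Write $V(C_n)=\{v_0,\dots,v_{n-1}\}$ with $v_iv_{i+1}\in E(C_n)$, indices mod $n$. First I would record the arithmetic constraint forced by the magic condition: summing $f(v_i)+f(v_iv_{i+1})+f(v_{i+1})=k$ over the $n$ edges gives $nk=2\sum_i f(v_i)+\sum_i f(v_iv_{i+1})$, and since the labels exhaust $\{1,\dots,2n\}$ one has $\sum_i f(v_i)+\sum_i f(v_iv_{i+1})=n(2n+1)$; hence $\sum_i f(v_i)=n(k-2n-1)$, which for $k=(5n+4)/2$ equals $\tfrac{n(n+2)}{2}$. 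Since $\bigl(1+2+\dots+(n+1)\bigr)-\bigl(\tfrac n2+1\bigr)=\tfrac{n(n+2)}2$, a natural attempt is to let the vertices carry the labels $\{1,2,\dots,n+1\}\setminus\{\tfrac n2+1\}$ and the edges the complementary set $\{\tfrac n2+1\}\cup\{n+2,\dots,2n\}$. Once a cyclic order of the vertex labels is fixed the magic condition forces $f(v_iv_{i+1})=k-f(v_i)-f(v_{i+1})$, so everything reduces to finding a cyclic ordering of $\{1,\dots,n+1\}\setminus\{\tfrac n2+1\}$ whose $n$ cyclically consecutive sums are pairwise distinct and all lie in $S:=\{2n+1\}\cup\bigl([\tfrac n2+2,\tfrac{3n}2]\cap\mathbb Z\bigr)$; since $|S|=n$ they then exhaust $S$, the forced edge labels become exactly the complementary set, and $f$ is a bijection onto $\{1,\dots,2n\}$ with the required valence.

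Next I would construct such an ordering. The value $2n+1$ is attainable only as $n+(n+1)$, so $n$ and $n+1$ must be cyclically consecutive; deleting that edge leaves a Hamiltonian path $n,x_1,x_2,\dots,x_{n-2},n+1$ on the remaining labels $\{1,\dots,n-1\}\setminus\{\tfrac n2+1\}$, and the problem becomes to order these $n-2$ labels so that the $n-1$ sums $n+x_1,\ x_1+x_2,\ \dots,\ x_{n-3}+x_{n-2},\ x_{n-2}+(n+1)$ are pairwise distinct and all lie in $[\tfrac n2+2,\tfrac{3n}2]$ --- equivalently, being $n-1$ in number, that they sweep out that whole interval. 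I would write down such an ordering explicitly (beginning $n,1,\dots$ and threading the remaining labels in a suitable interleaving of the smaller and the larger ones, so that the consecutive sums fill the target interval inward from its two ends), then verify the two properties; because the interleaving must turn around at slightly different places according to $n\bmod 4$, a small finite number of cases is to be expected. The smallest order $n=4$ falls outside the generic pattern and is treated directly, e.g.\ by the cyclic vertex order $1,6,2,3$ (with edge labels $5,4,7,8$); for $n=6$ the cyclic order $5,1,6,7,2,3$ already shows the general shape. Once a valid ordering is fixed, the resulting $f$ is, by construction, an edge-magic labeling of $C_n$ with valence $(5n+4)/2$.

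The main obstacle, as usual for explicit magic labelings, is step two: committing to one clean description of the ordering and proving, uniformly in $n$, that the $n-1$ path sums are distinct and exhaust $[\tfrac n2+2,\tfrac{3n}2]$ with no accidental coincidences --- the casework modulo $4$ and the handful of small exceptional orders being where the work concentrates. A shorter but less self-contained alternative would be to quote the theorem of Kotzig and Rosa \cite{K1} that every cycle is edge-magic together with the analysis of attainable cycle valences underlying the Godbold--Slater conjecture \cite{GodSla98}; however, a direct construction keeps the argument elementary, and that is the route I would follow.
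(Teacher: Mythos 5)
Your reduction is correct and nicely done: the counting argument forcing $\sum_i f(v_i)=n(k-2n-1)=\tfrac{n(n+2)}2$ for $k=(5n+4)/2$, the choice of vertex set $\{1,\dots,n+1\}\setminus\{\tfrac n2+1\}$, the observation that the forced edge labels are exactly the complementary set provided the $n$ cyclic consecutive vertex sums are distinct and lie in $S=\{2n+1\}\cup\bigl([\tfrac n2+2,\tfrac{3n}2]\cap\mathbb Z\bigr)$, and the remark that $2n+1$ forces $n$ and $n+1$ to be adjacent are all sound, and your $n=4$ and $n=6$ examples check out. But the proof stops exactly where the theorem lives: you never actually specify the ordering of $\{1,\dots,n-1\}\setminus\{\tfrac n2+1\}$ along the path from $n$ to $n+1$, nor verify that its $n-1$ consecutive sums are distinct and sweep out $[\tfrac n2+2,\tfrac{3n}2]$. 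Saying you ``would write down such an ordering explicitly'' with a ``suitable interleaving'' and that ``a small finite number of cases is to be expected'' modulo $4$ is a plan, not an argument; for an explicit-construction statement like this one, producing that ordering uniformly in $n$ (with the mod-$4$ casework carried out and the small exceptional orders handled) is the entire content, and without it the claim that such an interleaving exists for every even $n$ is unproved. For comparison, the paper itself does not prove this theorem at all --- it simply cites Wallis --- so there is no in-paper construction your sketch could be matched against; completing your route would in effect be reproving the result from Wallis's book.

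Your fallback suggestion does not close the gap either: Kotzig and Rosa's theorem gives that every cycle is edge-magic but does not by itself deliver the specific valence $(5n+4)/2$ for even $n$ (one would have to check what valence their construction produces), and the Godbold--Slater paper concerns realizing additional valences, with the full range only conjectural. So as written, either branch of your argument ends in a promissory note. To fix the direct route, commit to one explicit sequence --- e.g.\ give closed formulas for $x_1,\dots,x_{n-2}$ in the two residue classes of $n$ modulo $4$ --- compute the consecutive sums in closed form, and show they form a permutation of $[\tfrac n2+2,\tfrac{3n}2]$; your $n=6$ example suggests the right shape, but the general pattern and its verification must appear on the page.
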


In fact this result has been improved recently as shown in the next theorem. It is also worth to mention that McQuillian \cite{McQ09} has made important contributions in this direction.
\begin{theorem}\cite{PEM_LMR2}\label{evencnvalencepast}
Let $m=2^{\alpha}p_1^{\alpha_1}p_2^{\alpha_2}\ldots p_k^{\alpha_k}$ be the unique prime factorization (up to ordering) of an even number $m$. Then $C_m$ admits at least $\Sigma_{i=1}^{k}{\alpha_i}$ edge-magic labelings with at least $\Sigma_{i=1}^{k}{\alpha_i}$ mutually different magic sums. If $ \alpha \geq 2$, this lower bound can be improved to $1+\Sigma_{i=1}^{k}{\alpha_i}$.
\end{theorem}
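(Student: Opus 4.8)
The plan is to realize $C_m$, for each odd prime‑power divisor $P$ of $m$, as a $\otimes_h$‑product over a strictly smaller \emph{even} cycle, and then to transport to $C_m$ the edge‑magic labeling that Theorem~\ref{evencyclevalence} already supplies on that smaller cycle. Write $m=2^{\alpha}p_1^{\alpha_1}\cdots p_k^{\alpha_k}$ and put $N=\sum_{i=1}^{k}\alpha_i$. Fix a pair $(i,j)$ with $1\le i\le k$, $1\le j\le\alpha_i$, set $P=p_i^{j}$ and $d=m/P$; then $d$ is even, and $d\ge 4$ in every case except $m=2p_i^{\alpha_i}$ with $j=\alpha_i$, which I will discard. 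Since $P$ is odd, $C_P$ is super edge‑magic (Lemma~\ref{CnSEM}); moreover a one‑line count of edge‑sums ($2\sum_{i=1}^{P}i=P(P+1)$ against $Ps+P(P-1)/2$) forces the minimum of the set $S$ in Lemma~\ref{super_consecutive} to be $(P+3)/2$ for \emph{every} super edge‑magic labeling of $C_P$, so the two strong orientations $f(C_P)^{+}$ and $f(C_P)^{-}$ of the canonically labeled cycle (Remark~\ref{strongorientationofcm}) both lie in $\mathcal S_P^{(P+3)/2}$. Fix a strong orientation $C_d^{+}$ of $C_d$; its underlying graph carries an edge‑magic labeling of valence $(5d+4)/2$ by Theorem~\ref{evencyclevalence}. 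For any $h\colon E(C_d^{+})\to\mathcal S_P^{(P+3)/2}$, Theorem~\ref{spk} makes $C_d^{+}\otimes_{h}\mathcal S_P^{(P+3)/2}$ edge‑magic, and Lemma~\ref{valenceinducedproduct} gives it valence $P\bigl((5d+4)/2-3\bigr)+(P+3)/2+P=(5m+P+3)/2$.

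The substantive step is to choose $h$ so that the underlying graph of this product is exactly $C_m$ and not a disjoint union of shorter crowns. Every digraph in the image of $h$ is $1$‑regular, so the product is a $1$‑regular digraph on $m$ vertices, i.e. a disjoint union of directed cycles, and one only needs to exclude proper sub‑cycles. Label the vertices of the product $(t,\ell)$ with $t\in\mathbb{Z}_d$, $\ell\in\mathbb{Z}_P$; by~(\ref{conditionforcycle1})--(\ref{conditionforcycle2}) an arc runs $(t,\ell)\to(t+1,\ell')$ with $\ell'-\ell\equiv(P+1)/2\pmod P$ if $h$ assigns $f(C_P)^{+}$ to the $t$‑th edge and $\ell'-\ell\equiv(P-1)/2\pmod P$ if it assigns $f(C_P)^{-}$. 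Running once around the base cycle accumulates a displacement $\Delta\pmod P$, and the component through any vertex has length $m/\gcd(\Delta,P)$; hence it suffices to make $\gcd(\Delta,P)=1$. If $j=\alpha_i$, take $h$ constant equal to $f(C_P)^{+}$, so $\Delta\equiv d\cdot(P+1)/2\pmod P$; since $(P+1)/2$ is a unit mod $P$ and $p_i\nmid d=m/p_i^{\alpha_i}$, we get $\gcd(\Delta,P)=1$. If $j<\alpha_i$, assign $f(C_P)^{+}$ to exactly one edge of $C_d^{+}$ and $f(C_P)^{-}$ to the other $d-1$ edges, so $\Delta\equiv(P+1)/2+(d-1)(P-1)/2\equiv(2-d)/2\pmod P$; because now $p_i\mid d$ this gives $\Delta\equiv 1\pmod{p_i}$, so $p_i\nmid\Delta$ and again $\gcd(\Delta,P)=1$. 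In either case the product is a single directed $m$‑cycle, so its underlying graph is $C_m$.

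To finish I would count valences. Each admissible pair $(i,j)$ yields an edge‑magic labeling of $C_m$ of valence $(5m+p_i^{j}+3)/2$; the integers $p_i^{j}$ ($1\le i\le k$, $1\le j\le\alpha_i$) are pairwise distinct, so these valences are pairwise distinct, and since $p_i^{j}>1$ none of them equals $(5m+4)/2$, which is realized by the labeling of $C_m$ given directly by Theorem~\ref{evencyclevalence}. When $m$ is not of the form $2p^{\beta}$ --- in particular whenever $\alpha\ge2$, and also whenever $k\ge2$ --- all $N$ pairs are admissible (one checks $m/p_i^{j}\ge4$), so with the Theorem~\ref{evencyclevalence} labeling we obtain $N+1$ edge‑magic labelings with $N+1$ mutually different magic sums. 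When $m=2p^{\beta}$ the pair $(1,\beta)$ is inadmissible (there $d=2$ and $C_2$ has no edge‑magic labeling), leaving $\beta-1$ product labelings, but the Theorem~\ref{evencyclevalence} labeling of $C_{2p^{\beta}}$ restores the count to $\beta=N$; this is exactly the case $\alpha=1$, $k=1$. Finally, if $k=0$ then $N=0$: the claim is vacuous when $\alpha=1$, while for $\alpha\ge2$ the single Theorem~\ref{evencyclevalence} labeling gives the bound $1=N+1$.

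I expect the connectivity argument of the second paragraph to be the only genuine difficulty: the arithmetic --- that $(P+3)/2$ is the forced minimum sum for $C_P$, the displacement computation, and the valence formula --- is routine once the $\otimes_h$‑product is set up, but the displacement analysis must be handled separately in the cases $j=\alpha_i$ and $j<\alpha_i$, and one has to be sure that the sole inadmissible pair, $(1,\beta)$ for $m=2p^{\beta}$, never needs to be used.
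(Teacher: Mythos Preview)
The paper does not prove this theorem; it is quoted from \cite{PEM_LMR2} and used as a black box. There is therefore no in--paper proof to compare against. That said, your argument is correct and is almost certainly the one intended in \cite{PEM_LMR2}: you realize $C_m$ as $C_{m/P}^{+}\otimes_h\{f(C_P)^{+},f(C_P)^{-}\}$ for each odd prime power $P=p_i^{j}\mid m$, transport the edge-magic labeling of the even base cycle $C_{m/P}$ from Theorem~\ref{evencyclevalence} via Theorem~\ref{spk}, and read off the valence $(5m+P+3)/2$ from Lemma~\ref{valenceinducedproduct}. The connectivity step---choosing $h$ so that the total displacement $\Delta$ in $\mathbb{Z}_P$ is a unit, splitting into the cases $j=\alpha_i$ (constant $h$, using $p_i\nmid m/p_i^{\alpha_i}$) and $j<\alpha_i$ (one edge flipped, using $p_i\mid m/p_i^{j}$)---is handled cleanly, and the bookkeeping for the exceptional shape $m=2p^{\beta}$ and for $k=0$ is accurate. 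Your proof in fact yields slightly more than the stated theorem: when $\alpha=1$ but $k\ge 2$ you still get $1+\sum_i\alpha_i$ valences, not merely $\sum_i\alpha_i$.
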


Similarly, the next result was established in \cite{PEM_LMR2} for cycles of odd order.

\begin{theorem}\cite{PEM_LMR2}\label{oddcnvalencepast}
Let $m=p_1^{\alpha_1}p_2^{\alpha_2}\ldots p_k^{\alpha_k}$ be the unique prime factorization (up to ordering) of an odd number $m$. Then $C_m$ admits at least $1+\Sigma_{i=1}^{k}{\alpha_i}$ edge-magic labelings with at least $1+\Sigma_{i=1}^{k}{\alpha_i}$ mutually different magic sums.
\end{theorem}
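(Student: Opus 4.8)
The plan is to produce exactly $1+\sum_{i=1}^{k}\alpha_i$ edge-magic labelings of $C_m$ with pairwise different magic sums by combining the $\otimes_h$-product (Theorem \ref{spk}, Lemma \ref{valenceinducedproduct}) with the odd labeling (Lemma \ref{lemma: oddevenlabelings}); note that $C_m$ has equal order and size $m$, so the odd labeling applies. Fix a maximal chain of divisors $1=d_0\mid d_1\mid\cdots\mid d_s=m$, where $s=\sum_{i=1}^{k}\alpha_i$ and each quotient $q_j=d_j/d_{j-1}$ is an odd prime. I claim that for every $j\in\{0,1,\ldots,s\}$ there is an edge-magic labeling of $C_m$ whose magic sum is
\[
w_j=3m+\frac{3-m/d_j}{2}.
\]
Since $m/d_0=m>m/d_1>\cdots>m/d_s=1$, the values $w_0<w_1<\cdots<w_s$ are $s+1$ distinct integers, which is the desired bound.

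First I would realize $C_m$ as an iterated $\otimes_h$-product of canonically labeled prime cycles (Lemma \ref{CnSEM}), built one factor at a time: having produced a single directed cycle on $d_{j-1}$ vertices, I multiply by $C_{q_j}$ to reach $d_j$ vertices. The essential tool is the pair of strong orientations $C_{q_j}^+,C_{q_j}^-$ of Remark \ref{strongorientationofcm}, which are cyclic shifts by $(q_j+1)/2$ and $(q_j-1)/2$ modulo $q_j$. Assigning every arc of the outer cycle the orientation $C_{q_j}^+$ produces a composite shift $\equiv d_{j-1}\cdot 2^{-1}\pmod{q_j}$, which is a generator of $\mathbb{Z}_{q_j}$ precisely when $q_j\nmid d_{j-1}$; when $q_j\mid d_{j-1}$ (a repeated prime) I instead switch a single arc to $C_{q_j}^-$, lowering the composite shift to $-1\pmod{q_j}$, again a generator. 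In either case the product is a single cycle $C_{d_j}$, generalizing the construction $f(C_q)^+\otimes f(C_p)^-$ of Theorem \ref{pqsem}. Crucially, both orientations lie in $S_{q_j}^{(q_j+3)/2}$, so by Remark \ref{remarkspk} the induced edge sum, and hence the valence of Lemma \ref{valenceinducedproduct}, does not depend on which of them is used.

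The case $j=0$ is simply the canonical super edge-magic labeling of $C_m$, whose magic sum $(5m+3)/2$ equals $w_0$. For $j\ge1$ I would take the super edge-magic $C_{d_j}$ just constructed, apply the odd labeling of Lemma \ref{lemma: oddevenlabelings} to obtain an edge-magic labeling of $C_{d_j}$ with magic sum $3d_j+1$, and then continue the product by the remaining primes $q_{j+1},\ldots,q_s$, using the same connectivity-preserving orientations. Since the inner factors are super edge-magic, Theorem \ref{spk} guarantees the result remains an edge-magic labeling of the single cycle $C_m$, and iterating the formula of Lemma \ref{valenceinducedproduct} from the initial value $3d_j+1$ across total inner order $m/d_j$ telescopes to $3m+(3-m/d_j)/2=w_j$.

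The step I expect to be the main obstacle is the connectivity bookkeeping at repeated prime factors: in contrast to the coprime case $m=pq$ of Theorem \ref{pqsem}, when $q_j\mid d_{j-1}$ the constant (Kronecker) product decomposes into $q_j$ disjoint cycles, and one must check both that the single switched arc $C_{q_j}^-$ restores connectivity and that this switch leaves the $\otimes_h$-valence untouched. A secondary point is that the odd labeling, applied at level $j$, makes the outer factor edge-magic but not super edge-magic; one must confirm it is still a valid input to the product, which is exactly the generality of Theorem \ref{spk} and Lemma \ref{valenceinducedproduct}, and observe that the connectivity argument depends only on the arc structure and is therefore unaffected by the relabeling. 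With these verified, the pairwise distinctness of the $w_j$, and hence the bound of $1+\sum_{i=1}^{k}\alpha_i$ distinct magic sums, is immediate.
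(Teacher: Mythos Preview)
The paper does not actually prove this theorem: it is quoted verbatim from \cite{PEM_LMR2} and used as an input to the counting results of Section~4. Consequently there is no in-paper proof to compare your proposal against.

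That said, your argument is a sound reconstruction built entirely from the tools the paper does supply. The valence bookkeeping is correct: the canonical super edge-magic labeling of $C_{d_j}$ has valence $(5d_j+3)/2$, the odd labeling of Lemma~\ref{lemma: oddevenlabelings} sends this to $3d_j+1$, and iterating Lemma~\ref{valenceinducedproduct} with $k=(q_t+3)/2$ at each step gives the recursion $v\mapsto q_t v+(3-3q_t)/2$, which indeed telescopes to $w_j=3m+(3-m/d_j)/2$. The distinctness of the $w_j$ is then clear.

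Your connectivity analysis is also right, including the one point you flagged as the main obstacle. Viewing $C_{q_j}^+$ and $C_{q_j}^-$ as the cyclic permutations by $(q_j\pm1)/2$, the product $C_{d_{j-1}}\otimes_h\{C_{q_j}^\pm\}$ is a single cycle exactly when the composite of the $d_{j-1}$ permutations is a $q_j$-cycle. With all arcs assigned $C_{q_j}^+$ the composite shift is $d_{j-1}\cdot 2^{-1}\pmod{q_j}$, a generator when $q_j\nmid d_{j-1}$; when $q_j\mid d_{j-1}$, replacing a single arc by $C_{q_j}^-$ lowers the composite shift by exactly $1$, yielding $-1\pmod{q_j}$, again a generator. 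Because both orientations lie in $\mathcal{S}_{q_j}^{(q_j+3)/2}$, Remark~\ref{remarkspk} guarantees the induced valence is unaffected by this switch. Finally, the odd labeling changes only labels, not arcs, so the connectivity argument for the subsequent factors is untouched, and Theorem~\ref{spk} in its edge-magic (not only super edge-magic) form carries the labeling through. Since \cite{PEM_LMR2} is by the same authors and develops exactly this $\otimes_h$/odd--even machinery, your approach is almost certainly the intended one.
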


\begin{lemma}\label{repeatedvalences}
Let $g:V(C_m^+) \cup E(C_m^+) \rightarrow \{1,2,\ldots,2m\}$ be an edge-magic labeling of $C_m^+$, and let $\gamma_r:V(\overrightarrow{K}_{1,n}^l) \rightarrow \{1,2,\ldots,n+1\}$ be a super edge-magic labeling of $\overrightarrow{K}_{1,n}^l$ that assigns label $r$ to the central vertex with $\hbox{val}(\gamma_r)=r+2n+3, \ 1 \leq r \leq n+1$. Then the induced edge-magic labeling $\widehat{g}_r$ of $C^+_m \otimes_h \overrightarrow{K}_{1,n}^l$ has valence $(n+1)(\hbox{val}(g)-2)+r+1$. Let $g'$ be a different edge-magic labeling of $C^+_m$ with $\hbox{val}(g) < \hbox{val}(g')$, then $\hbox{val}(\widehat{g}_{n+1}) < \hbox{val}(\widehat{g}_{1}')$, where $\widehat{g}_r'$ is the induced edge-magic labeling of $C^+_m \otimes \overrightarrow{K}_{1,n}^l$ when $\overrightarrow{K}_{1,n}^l$ is labeled with $\gamma_r$ and $C^+_m$ with $g'$. \end{lemma}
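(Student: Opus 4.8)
The plan is to recognise the labeled digraph $\overrightarrow{K}_{1,n}^l$ equipped with $\gamma_r$ as a member of the family $\mathcal{S}_{n+1}^{r+1}$, so that $\widehat{g}_r$ is exactly the labeling of $C_m^+\otimes_h\overrightarrow{K}_{1,n}^l$ produced by Theorem \ref{spk}, and then to read off its valence from Lemma \ref{valenceinducedproduct}. First I would note that $\overrightarrow{K}_{1,n}^l$ has $n+1$ vertices and $n+1$ arcs (the $n$ spokes together with the loop), and that under $\gamma_r$ the central vertex carries label $r$ while the leaves carry the labels $\{1,\dots,n+1\}\setminus\{r\}$. Hence the set of sums of labels of adjacent vertices is $\{2r\}\cup\{r+\ell:\ \ell\in\{1,\dots,n+1\}\setminus\{r\}\}=\{r+1,r+2,\dots,r+n+1\}$, a block of $n+1$ consecutive integers with minimum $r+1$; by Lemma \ref{super_consecutive} this simultaneously confirms that $\gamma_r$ is super edge-magic, that $\hbox{val}(\gamma_r)=(n+1)+(n+1)+(r+1)=r+2n+3$, and that $\overrightarrow{K}_{1,n}^l\in\mathcal{S}_{n+1}^{r+1}$.

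Taking $h$ to be the constant function on $E(C_m^+)$ with value $\gamma_r$ (so that all fibres lie in the single family $\mathcal{S}_{n+1}^{r+1}$), Theorem \ref{spk} applied with the digraph $D=C_m^+$, which is edge-magic by the hypothesis on $g$, guarantees that $C_m^+\otimes_h\overrightarrow{K}_{1,n}^l$ is edge-magic; this is the labeling $\widehat{g}_r$. Its valence follows directly from Lemma \ref{valenceinducedproduct} with parameters $p=n+1$ and $k=r+1$:
\[
\hbox{val}(\widehat{g}_r)=(n+1)\bigl(\hbox{val}(g)-3\bigr)+(r+1)+(n+1)=(n+1)\bigl(\hbox{val}(g)-2\bigr)+r+1,
\]
which is the asserted formula.

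For the comparison I would substitute $r=n+1$ (with $g$) and $r=1$ (with $g'$) into this formula, obtaining $\hbox{val}(\widehat{g}_{n+1})=(n+1)(\hbox{val}(g)-2)+n+2$ and $\hbox{val}(\widehat{g}_1')=(n+1)(\hbox{val}(g')-2)+2$, the latter by applying the same formula to the edge-magic labeling $g'$. Subtracting,
\[
\hbox{val}(\widehat{g}_1')-\hbox{val}(\widehat{g}_{n+1})=(n+1)\bigl(\hbox{val}(g')-\hbox{val}(g)\bigr)-n .
\]
Since valences of edge-magic labelings are integers and $\hbox{val}(g)<\hbox{val}(g')$, we have $\hbox{val}(g')-\hbox{val}(g)\ge 1$, so the right-hand side is at least $(n+1)-n=1>0$, giving $\hbox{val}(\widehat{g}_{n+1})<\hbox{val}(\widehat{g}_1')$, as required.

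The argument is essentially bookkeeping, so there is no deep obstacle; the point that needs care is the role reversal in the $\otimes_h$-product relative to Section \ref{section: main} — here $C_m^+$ plays the part of the base digraph $D$ and the copies of $\overrightarrow{K}_{1,n}^l$ are the fibres drawn from $\mathcal{S}_{n+1}^{r+1}$ (whereas in Section \ref{section: main} it was the other way around) — together with the correct identification $k=r+1$ of the index of that family, since the $+1$ in $r+1$ is precisely what yields the term $+r+1$ in the valence and hence a gap of size $n+1$ between the blocks coming from consecutive valences of the cycle.
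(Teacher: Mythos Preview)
Your proof is correct and follows essentially the same approach as the paper: both apply Lemma \ref{valenceinducedproduct} with $p=n+1$ and $k=r+1$ to obtain the valence formula, and then compare the two extreme cases using that $\hbox{val}(g')-\hbox{val}(g)\ge 1$. Your version is more explicit in verifying that the labeled $\overrightarrow{K}_{1,n}^l$ lies in $\mathcal{S}_{n+1}^{r+1}$, which the paper leaves implicit.
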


\begin{proof}
By Lemma \ref{valenceinducedproduct}, $\hbox{val}(\widehat{g}_{r})=(n+1)[\hbox{val}(g)-3]+r+1+n+1$, that is,
$\hbox{val}(\widehat{g}_{r})=(n+1)[\hbox{val}(g)-2]+r+1$. Let $g'$ be a different edge-magic labeling of $C^+_m$ with $\hbox{val}(g) < \hbox{val}(g')$, then  $\hbox{val}(\widehat{g}_{n+1})= (n+1)[\hbox{val}(g)-2]+n+2 \leq (n+1)[\hbox{val}(g')-1-2]+n+2 <  \hbox{val}(\widehat{g}_{1}')$. Hence the result follows.
\end{proof}
Now using Theorem \ref{evencnvalencepast} and Lemmas \ref{k1nsem} and \ref{repeatedvalences}, we can prove the next theorem.

\begin{theorem}
Let $m=2^{\alpha}p_1^{\alpha_1}p_2^{\alpha_2}\ldots p_k^{\alpha_k}$ be the unique prime factorization (up to ordering) of an even number $m$. Then $G=C_m \odot \overline{K}_n$ admits at least $(\Sigma_{i=1}^{k}{\alpha_i})(n+1)$ mutually different magic sums. If $ \alpha \geq 2$, this lower bound can be improved to $(1 +  \Sigma_{i=1}^{k}{\alpha_i})(n+1)$.
\end{theorem}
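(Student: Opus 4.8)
The plan is to combine the three tools gathered just before the statement: the many magic sums available on an even cycle (Theorem~\ref{evencnvalencepast}), the full flexibility of the star with a loop (Lemma~\ref{k1nsem}), and the valence bookkeeping for the $\otimes_h$-product recorded in Lemma~\ref{repeatedvalences}.

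First I would set $t=\sum_{i=1}^{k}\alpha_i$, or $t=1+\sum_{i=1}^{k}\alpha_i$ when $\alpha\ge 2$, and apply Theorem~\ref{evencnvalencepast} to fix $t$ edge-magic labelings $g^{(1)},\dots,g^{(t)}$ of $C_m$ whose magic sums are pairwise different; after relabelling we may assume $\mathrm{val}(g^{(1)})<\cdots<\mathrm{val}(g^{(t)})$. Fix a strong orientation $C_m^{+}$ of $C_m$; by the convention that a digraph carries the labelings of its underlying graph, each $g^{(s)}$ is an edge-magic labeling of $C_m^{+}$. Next, for every $r\in\{1,\dots,n+1\}$, Lemma~\ref{k1nsem} provides a super edge-magic labeling $\gamma_r$ of $\overrightarrow K_{1,n}^{l}$ assigning label $r$ to the central vertex; a one-line check of the adjacent-vertex sums shows that the minimum such sum equals $r+1$, so $(\overrightarrow K_{1,n}^{l},\gamma_r)\in\mathcal S_{n+1}^{\,r+1}$.

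Then, for each pair $(s,r)$ I would take the constant function $h\equiv(\overrightarrow K_{1,n}^{l},\gamma_r)$ and form $C_m^{+}\otimes_h\overrightarrow K_{1,n}^{l}$. Its underlying graph is $C_m\odot\overline K_n$ --- this is where the strong orientation matters, since then each cycle vertex has a unique in-neighbour and the $n$ pendant vertices attach cleanly --- and by Theorem~\ref{spk}, via Lemma~\ref{valenceinducedproduct}, the induced labeling $\widehat{g^{(s)}}_r$ is edge-magic with valence $(n+1)\bigl(\mathrm{val}(g^{(s)})-2\bigr)+r+1$, exactly as in Lemma~\ref{repeatedvalences}. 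Letting $r$ run over $1,\dots,n+1$ with $s$ fixed yields a block of $n+1$ consecutive magic sums of $G$; and the second assertion of Lemma~\ref{repeatedvalences} gives $\mathrm{val}(\widehat{g^{(s)}}_{n+1})<\mathrm{val}(\widehat{g^{(s+1)}}_{1})$, so the $t$ blocks are pairwise disjoint. Hence $G$ has at least $t(n+1)$ mutually different magic sums, which is the claimed bound, and the case $\alpha\ge 2$ is identical, starting from the improved count $1+\sum_{i=1}^{k}\alpha_i$ of cycle labelings.

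The only computations are the two small verifications (that $\mathrm{und}(C_m^{+}\otimes_h\overrightarrow K_{1,n}^{l})\cong C_m\odot\overline K_n$, and that $\gamma_r$ realises minimum adjacent-vertex sum $r+1$); the single point that actually does the work is the interleaving inequality $\mathrm{val}(\widehat{g^{(s)}}_{n+1})<\mathrm{val}(\widehat{g^{(s+1)}}_{1})$, which guarantees that different cycle magic sums never produce overlapping blocks for the crown. Since Lemma~\ref{repeatedvalences} already isolates precisely this inequality, I do not expect any genuine obstacle beyond assembling these pieces.
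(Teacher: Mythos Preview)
Your proposal is correct and follows essentially the same route as the paper: identify $\mathrm{und}(C_m^{+}\otimes_h \overrightarrow K_{1,n}^{l})$ with $C_m\odot\overline K_n$, feed in the edge-magic labelings of $C_m$ supplied by Theorem~\ref{evencnvalencepast} together with the $n+1$ super edge-magic labelings $\gamma_r$ of $\overrightarrow K_{1,n}^{l}$ from Lemma~\ref{k1nsem}, and use Lemma~\ref{repeatedvalences} both for the valence formula and for the block-disjointness inequality. Your write-up is somewhat more explicit (ordering the cycle valences, verifying $\gamma_r\in\mathcal S_{n+1}^{\,r+1}$), but the argument is the paper's own.
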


\begin{proof}
 Note that $G \cong \hbox{und}(C_m^+ \otimes \overrightarrow K_{1,n}^{l})$. Let $\widehat{g}$ and $g$ be edge-magic labelings of $C_m^+ \otimes \overrightarrow K_{1,n}^{l}$ and $C_m^+$ respectively and let $\gamma_r$ be a super edge-magic labeling of $\overrightarrow{K}_{1,n}^l$ that assigns label $r$ to the central vertex with $\hbox{val}(\gamma_r)=r+2n+3$, $1 \leq r \leq n+1$. By Lemma \ref{repeatedvalences}, we get $\hbox{val}(\widehat{g_r})=(n+1)[\hbox{val}(g)-2]+r+1$. Thus, $\hbox{val}(\widehat{g})$ depends on the valences of $g$ and $r$. We know that by Lemma \ref{k1nsem}, $\overrightarrow{K}_{1,n}^l$ has $n+1$ valences and by Theorem \ref{evencnvalencepast}, $C_m$ has at least $\Sigma_{i=1}^{k}{\alpha_i}$ mutually different valences. Thus, using Lemma \ref{repeatedvalences}, $G=C_m \odot \overline{K}_n$ admits at least $(\Sigma_{i=1}^{k}{\alpha_i})(n+1)$ mutually different magic sums. If $ \alpha \geq 2$, this lower bound can be improved to $(1 + \Sigma_{i=1}^{k}{\alpha_i})(n+1)$.
\end{proof}

Similarly, using Theorem \ref{oddcnvalencepast} and Lemmas \ref{k1nsem} and \ref{repeatedvalences}, we can prove the next theorem.

\begin{theorem}
Let $m=p_1^{\alpha_1}p_2^{\alpha_2}\ldots p_k^{\alpha_k}$ be the unique prime factorization (up to ordering) of an odd number $m$. Then $G=C_m \odot \overline{K}_n$ admits at least $(1 +  \Sigma_{i=1}^{k}{\alpha_i})(n+1)$ mutually different magic sums.
\end{theorem}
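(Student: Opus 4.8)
The proof follows the same structural blueprint as the even case (the one immediately preceding), simply replacing the input theorems with their odd-order analogues. The plan is as follows.

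First I would note the standard isomorphism $G \cong \hbox{und}(C_m^+ \otimes_h \overrightarrow K_{1,n}^l)$, which expresses the crown as a $\otimes_h$-product of a (super) edge-magic cycle with the fan-plus-loop digraph $\overrightarrow K_{1,n}^l$. By Theorem \ref{oddcnvalencepast}, since $m = p_1^{\alpha_1}\cdots p_k^{\alpha_k}$ is odd, the cycle $C_m$ admits at least $1 + \Sigma_{i=1}^k \alpha_i$ edge-magic labelings with pairwise distinct magic sums; pick one representative $g$ for each such value, and order them so that their valences strictly increase. By Lemma \ref{k1nsem}, for each $r$ with $1 \le r \le n+1$ there is a super edge-magic labeling $\gamma_r$ of $\overrightarrow K_{1,n}^l$ with $\hbox{val}(\gamma_r) = r + 2n + 3$, all lying in $\mathcal S_{n+1}^k$ for the appropriate $k$, so Theorem \ref{spk} applies and each pair $(g, \gamma_r)$ produces an edge-magic labeling $\widehat g_r$ of $G$.

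Next I would apply Lemma \ref{repeatedvalences}: for a fixed cycle labeling $g$, as $r$ ranges over $1, \ldots, n+1$ the induced valences $\hbox{val}(\widehat g_r) = (n+1)[\hbox{val}(g) - 2] + r + 1$ take $n+1$ distinct consecutive values. The second assertion of Lemma \ref{repeatedvalences} guarantees that if $g'$ is another of our chosen cycle labelings with $\hbox{val}(g) < \hbox{val}(g')$, then $\hbox{val}(\widehat g_{n+1}) < \hbox{val}(\widehat g_1')$, so the blocks of $n+1$ valences arising from different cycle labelings are pairwise disjoint. Hence the total count of distinct magic sums obtained is $(1 + \Sigma_{i=1}^k \alpha_i)(n+1)$, which is the desired bound.

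The only point that requires a little care — and the place where one must invoke Lemma \ref{repeatedvalences} rather than a naive counting argument — is ensuring the $1 + \Sigma_{i=1}^k \alpha_i$ blocks do not overlap; this is exactly the content of the strict inequality $\hbox{val}(\widehat g_{n+1}) < \hbox{val}(\widehat g_1')$, which in turn rests on the gap $\hbox{val}(g') \ge \hbox{val}(g) + 1$ between consecutive distinct cycle valences being large enough after multiplication by $n+1$. Since all of this is already packaged in Lemma \ref{repeatedvalences}, there is no real obstacle: the argument is a direct transcription of the even-order proof, using Theorem \ref{oddcnvalencepast} in place of Theorem \ref{evencnvalencepast}, and hence no separate improvement term (the ``$1+$'' is already present in the odd case).
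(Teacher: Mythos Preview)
Your proposal is correct and follows exactly the approach the paper intends: the paper does not even write out a separate proof for the odd case, stating only that it follows ``similarly, using Theorem \ref{oddcnvalencepast} and Lemmas \ref{k1nsem} and \ref{repeatedvalences}'' by transcribing the even-order argument. Your write-up does precisely this, with the same isomorphism $G\cong\hbox{und}(C_m^+\otimes_h\overrightarrow K_{1,n}^l)$, the same block-counting via Lemma \ref{repeatedvalences}, and the correct observation that the ``$1+$'' term is already built into Theorem \ref{oddcnvalencepast}.
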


Let $f$ be the canonical labeling of the cycle $C_{p^kq}$, where $p$ and $q$ are different odd primes and $k$ is a positive integer. The construction provide in Section \ref{section: main} guarantees the existence of a super edge-magic labeling of the crown $C_{p^kq}\odot \bar K_n$, with valence $ \hbox{val}(f_1)+r-1$, for many values of $r$. The possible exceptions can be obtained from Corollary \ref{coro: els valors de r conflictius}.

\begin{openQ}
Prove or disprove that $C_{p^kq}\odot \overline K_n$, where $p$ and $q$ are different odd primes and $k$ is a positive integer is perfect (super) edge-magic.
\end{openQ}

\noindent {\bf Acknowledgements}
The research conducted in this document by the first author has been supported by the Spanish Research Council under project
MTM2011-28800-C02-01 and symbolically by the Catalan Research Council
under grant 2014SGR1147.

\end{document}